\numberwithin{equation}{section}
\numberwithin{figure}{section}
\numberwithin{table}{section}
\renewcommand{\d}{\mathrm{d}}
\renewcommand{\i}{\mathrm{i}}
\newcommand{\e}{\mathrm{e}}
\newcommand{\R}{\mathbb{R}}
\newcommand{\N}{\mathbb{N}}
\newcommand{\daoshu}[2]{\dfrac{\d #1}{\d #2}}
\newcommand{\SN}{\mathcal{S}_\mathcal{N}}
\newcommand{\SL}{\mathcal{S}_\mathcal{L}}
\newcommand{\SNh}{\mathcal{S}_\mathcal{N}^h}
\newcommand{\SLh}{\mathcal{S}_\mathcal{L}^h}
\newcommand{\Hp}{H_\text{\rm per}}
\newcommand{\Lp}{L_\text{\rm per}}
\newtheorem{example}{Example}[section]
\newtheorem{theorem}{Theorem}[section]
\newtheorem{lemma}{Lemma}[section]
\newtheorem{proposition}{Proposition}[section]
\newtheorem*{remark}{Remark}
\begin{document} 

\title{Convergence of a Fast Explicit Operator Splitting Method for the Molecular Beam Epitaxy Model   
\date{}
\author{Xiao Li\footnotemark[2]\and Zhonghua Qiao\footnotemark[3]\and Hui Zhang\footnotemark[4]}

\renewcommand{\thefootnote}{\fnsymbol{footnote}}
\footnotetext[2]
{School of Mathematical Sciences, Beijing Normal University, Beijing, P.~R.~China, 100875 (lixiao1228@163.com).}
\footnotetext[3]
{Department of Applied Mathematics, The Hong Kong Polytechnic University, Hung Hom, Kowloon, Hong Kong (zhonghua.qiao@polyu.edu.hk).}
\footnotetext[4]
{Laboratory of Mathematics and Complex Systems, Ministry of Education and School of Mathematical Sciences,
Beijing Normal University, Beijing, P.~R.~China, 100875 (hzhang@bnu.edu.cn).}
\renewcommand{\thefootnote}{\arabic{footnote}}

\maketitle                                       

\begin{abstract}
A fast explicit operator splitting (FEOS) method for the molecular beam epitaxy model has been presented in
[Cheng, et al., Fast and stable explicit operator splitting methods for phase-field models, J. Comput. Phys., submitted].
The original problem is split into linear and nonlinear subproblems.
For the linear part, the pseudo-spectral method is adopted;
for the nonlinear part, a 33-point difference scheme is constructed.
Here, we give a compact center-difference scheme involving fewer points for the nonlinear subproblem.
Besides, we analyze the convergence rate of the algorithm.
The global error order $\mathcal{O}(\tau^2+h^4)$ in discrete $L^2$-norm is proved theoretically and verified numerically.
Some numerical experiments show the robustness of the algorithm for small coefficients of the fourth-order term for the one-dimensional case.
Besides, coarsening dynamics are simulated in large domains and the $1/3$ power laws are observed for the two-dimensional case.
\end{abstract}

\noindent\textbf{Key words:}
molecular beam epitaxy, fast explicit operator splitting, finite difference method, pseudo-spectral method, stability, convergence.

\section{Introduction}

Recently, the molecular beam epitaxy (MBE) has become an important technique for the growth of thin films.
By using the MBE technique, it is possible to grow high-quality crystalline materials
and form structures with high precision in the vertical direction \cite{HeSi89}.
There has been a large amount of research interest in the dynamics of the MBE growth.
Different kinds of models have been developed to describe the growth evolution,
including atomistic models, continuum models, and hybrid models \cite{EvThBa06}.
In our work, we are interested in the continuum models for the evolution of the MBE growth.
The evolution is governed by the following nonlinear partial differential equation:
\begin{equation}
\label{MBE2D_model1}
u_t=\nabla\cdot[(|\nabla u|^2-1)\nabla u]-\delta\Delta^2u,\quad(x,y)\in\Omega,\ t\in(0,T],
\end{equation}
where $\delta>0$ is a constant, $\Omega=(0,2L)^2$ with $L>0$, and $u:\Omega\times(0,\infty)\to\R$ is an $\Omega$-periodic scaled height function
equipped with the initial data $$u(x,y,0)=u_0(x,y),\quad(x,y)\in\overline{\Omega}.$$
The fourth-order term models the surface diffusion and the nonlinear second-order term models the Ehrlich-Schwoebel effect \cite{EhHu66,MoGo00,ScSh66}.
The equation (\ref{MBE2D_model1}) is the gradient flow with respect to the $L^2(\Omega)$ inner product of the energy functional
\begin{equation}
\label{MBE2D_energy1}
E(u)=\int_\Omega\Big(\frac{1}{4}(|\nabla u|^2-1)^2+\frac{\delta}{2}|\Delta u|^2\Big)\,\d x\d y.
\end{equation}
With the periodic boundary condition, it is easy to show that the energy $E$ decreases with respect to the time.
For the coarsening dynamics governed by (\ref{MBE2D_model1}),
the exponents measured experimentally are $1/3$, which is observed in numerical simulations of the MBE growth \cite{ChKuQuTa15,QiZhTa11,ShWaWaWi12,XuTa06}
and analyzed quantitively by introducing a kinetic scaling theory \cite{LiLi03}.

There have been many theoretical and numerical studies on the MBE models.
The well-posedness and regularity of the initial-boundary-value problem of the model (\ref{MBE2D_model1})
are studied in \cite{LiLi03} using the Galerkin approximation method.
For the MBE simulations, a large computational domain is necessary
in order to make the effect of periodicity assumption as small as possible and to collect enough statistical information
such as mean surface height and width of the pyramid-like structures.
Besides, a sufficiently long integration time is necessary
in order to detect the epitaxy growth behaviors and to reach the physical scaling regime.
To carry out numerical simulations with large time and large computational domain,
highly stable and accurate numerical schemes are required.
The equation (\ref{MBE2D_model1}) is highly nonlinear with a small surface diffusion parameter $\delta$,
which makes it difficult to design an effective numerical scheme.
In \cite{QiSuZh12}, two stable and convergent linearized difference schemes are derived by using the method of reduction of order \cite{Sun09}.
The convergence rates are $\mathcal{O}(\tau+h^2)$ and $\mathcal{O}(\tau^2+h^2)$ in discrete $L^2$-norm, respectively.
Both the nonlinear part and the diffusion term are treated explicitly there.
In \cite{QiZhTa11}, two unconditionally energy stable difference schemes are presented.
These two schemes are second-order convergent in time and nonlinear.
Because of the unconditional stability, an adaptive time-stepping strategy is purposed there.
In \cite{ShWaWaWi12,WaWaWi10},
the first- and second-order (in time) convex splitting schemes are constructed under the framework exploited by Eyre \cite{Eyr98}.
Still, both the two schemes are nonlinear and unconditionally energy stable.
The similar technique has been used extensively on different phase field models, e.g.,
the phase field crystal model \cite{WiWaLo09}, a diffusive interface model with Peng-Robinson equation of state \cite{QiSu14}, etc.
In \cite{XuTa06}, the authors introduce an implicit-explicit scheme combined with Fourier pseudo-spectral approach,
where the nonlinear term is treated explicitly and the fourth-order term implicitly.
To guarantee the stability, they add an extra artificial term consistent with the truncated errors in time.
However, the condition, under which the energy stability can be obtained without any restriction on time step, depends on the unknown numerical solutions.
In \cite{QiTaXi15}, a mixed finite element method with Crank-Nicolson time-stepping scheme is presented
and the energy laws are proved for both semi- and fully-discrete form of the scheme.

In \cite{ChKuQuTa15}, a fast explicit operator splitting (FEOS) method based on the Strang splitting schemes \cite{Str68}
is constructed to simulate the MBE equations for both one- and two-dimensional cases.
The main idea of the method is to split the original equation (\ref{MBE2D_model1}) into nonlinear and linear parts
whose exact solution operators are denoted by $\SN$ and $\SL$,
and then to evolve one splitting step (from $t$ to $t+\tau$) via three substeps:
$$u(x,y,t+\tau)=\SL\Big(\frac{\tau}{2}\Big)\SN(\tau)\SL\Big(\frac{\tau}{2}\Big)u(x,y,t).$$
A similar strategy has also been used to solve the phase field crystal equation in \cite{LeShLe15}.
In \cite{ChKuQuTa15},
the nonlinear part is solved by the 33-point center-difference scheme combined with the large stability domain explicit Runge-Kutta solver,
and the linear one is solved by the pseudo-spectral method.
Their numerical experiments indicate that the proper constant time step should be $\tau=\delta/100$.
In addition, the FEOS method has also been successfully utilized on
the convection-diffusion equations equations \cite{ChKu09,ChKuPe05,ChKuPe09} and the modified Buckley-Leverett equations \cite{KaKuQuWa14}.
It is capable to achieve a reliable numerical solutions in an efficient manner, that is, only few splitting steps are preformed \cite{ChKuPe09}.

In our work, we concentrate mainly on the convergence analysis of the FEOS method for the MBE equation in the two-dimensional case.
The main issue, which is different from that in \cite{ChKuQuTa15}, consists of three aspects.
First, we discretize the nonlinear part by a 25-point center-difference scheme in space
and the explicit strong stability preserving Runge-Kutta method in time,
and combine the so-called ``frozen coefficient'' technique with the Fourier analysis method to derive a constraint on the time step for the stability.
Second, we analyze the convergence of the entire algorithm.
The global discrete $L^2$-error consists of the truncation errors from the splitting, the nonlinear and linear schemes, respectively.
Third, we carry out some numerical experiments to verify the convergence rate,
and test the robustness of the algorithm with small $\delta$ in the one-dimensional case.
Numerical experiments suggest that the time step can be set as $\tau=\delta/10$ using our algorithm.
This result is a little better than that in \cite{ChKuQuTa15},
because the difference scheme for nonlinear part involves fewer points, which may loosen the restriction on the time step.
Besides, we consider the two-dimensional coarsening dynamics
to observe the $-1/3$ power law of the energy and the $1/3$ power law of the mean height.

The organization of this paper is as follows.
In Section 2, we present the fast explicit operator splitting method for the two-dimensional MBE equation,
and give a sufficient condition for the stability of the algorithm here.
In Section 3, the discrete $L^2$-error estimate of the FEOS method is shown both theoretically and numerically.
Further numerical experiments are carried out and the power law for the coarsening dynamics is observed in Section 4.
Some concluding remarks are given in Section 5.

\section{Fast explicit operator splitting method}

Here we present the algorithm developed in \cite{ChKuQuTa15}
where the nonlinear and linear parts are approximated by different methods,
and construct a more compact difference scheme for the nonlinear part.

\subsection{Splitting strategy}\label{ss_split_strategy}

In \cite{ChKuQuTa15}, the equation (\ref{MBE2D_model1}) is split into the nonlinear part
\begin{equation}
\label{MBE2D_nonlinear}
u_t=\nabla\cdot(|\nabla u|^2\nabla u),
\end{equation}
and the linear part
\begin{equation}
\label{MBE2D_linear}
u_t=-\Delta u-\delta\Delta^2u,
\end{equation}
whose exact solution operators are denoted by $\SN$ and $\SL$, respectively.
Introducing a splitting time step $\tau$, the solution of the equation (\ref{MBE2D_model1})
is resolved from $t$ to $t+\tau$ via the Strang splitting method \cite{Str68} consisting of three substeps:
\begin{equation}
\label{StrangOS_LNL}
u(x,y,t+\tau)=\SL\Big(\frac{\tau}{2}\Big)\SN(\tau)\SL\Big(\frac{\tau}{2}\Big)u(x,y,t).
\end{equation}
In general, if all the solutions involved in the three-step splitting scheme (\ref{StrangOS_LNL}) are smooth,
the operator splitting method is second-order accurate \cite{Str68}.

For the nonlinear subproblem (\ref{MBE2D_nonlinear}), the solution is $L^2$-stable with respect to the initial data,
which is described precisely by the following proposition.

\begin{proposition}
\label{prop_stability_SN}
Assuming that $u_0,v_0\in\Hp^2(\Omega)$, we have
$$\|\SN(t)u_0-\SN(t)v_0\|_{L^2(\Omega)}\le\|u_0-v_0\|_{L^2(\Omega)},\quad\forall t>0,$$
where $\Hp^2(\Omega)=\{u\in H^2(\Omega)\,|\,u~\text{is $\Omega$-periodic}\}$.
\end{proposition}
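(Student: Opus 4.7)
The plan is to derive an energy-type identity for the difference $w=\mathcal{S}_\mathcal{N}(t)u_0-\mathcal{S}_\mathcal{N}(t)v_0$ and extract $L^2$-contractivity from the monotonicity of the vector field $a\mapsto|a|^2a$. Writing $u=\mathcal{S}_\mathcal{N}(t)u_0$, $v=\mathcal{S}_\mathcal{N}(t)v_0$, the function $w=u-v$ is $\Omega$-periodic and satisfies
\begin{equation*}
w_t=\nabla\cdot\bigl(|\nabla u|^2\nabla u-|\nabla v|^2\nabla v\bigr).
\end{equation*}

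First I would multiply this equation by $w$ and integrate over $\Omega$. Using the $\Omega$-periodicity of $u$ and $v$, integration by parts kills the boundary terms and leads to
\begin{equation*}
\frac{1}{2}\frac{\d}{\d t}\|w\|_{L^2(\Omega)}^2
=-\int_\Omega(\nabla u-\nabla v)\cdot\bigl(|\nabla u|^2\nabla u-|\nabla v|^2\nabla v\bigr)\,\d x\d y.
\end{equation*}
The regularity $u_0,v_0\in\Hp^2(\Omega)$ is used here to justify that this manipulation is legitimate (so that all the integrals make sense and boundary terms vanish in the periodic sense).

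The heart of the argument is the pointwise monotonicity inequality
\begin{equation*}
(a-b)\cdot\bigl(|a|^2a-|b|^2b\bigr)\ge 0,\quad\forall a,b\in\R^2.
\end{equation*}
To prove it I would expand the left-hand side as $|a|^4+|b|^4-(|a|^2+|b|^2)(a\cdot b)$, bound $a\cdot b\le|a||b|$ by Cauchy-Schwarz, and then use the algebraic identity
\begin{equation*}
|a|^4+|b|^4-(|a|^2+|b|^2)|a||b|=(|a|-|b|)^2(|a|^2+|a||b|+|b|^2)\ge 0.
\end{equation*}
This is really the only nontrivial step; everything else is routine.

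Applying the monotonicity pointwise with $a=\nabla u$ and $b=\nabla v$, the integrand is nonnegative, hence $\frac{\d}{\d t}\|w\|_{L^2(\Omega)}^2\le 0$. Integrating in time from $0$ to $t$ gives $\|w(\cdot,t)\|_{L^2(\Omega)}\le\|w(\cdot,0)\|_{L^2(\Omega)}=\|u_0-v_0\|_{L^2(\Omega)}$, which is the claimed bound. The main obstacle is therefore the algebraic monotonicity lemma above; the rest is an entirely standard energy estimate.
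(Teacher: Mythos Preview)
Your proof is correct and follows the same overall strategy as the paper: form $w=u-v$, multiply by $w$, integrate by parts using periodicity, and show the resulting right-hand side is nonpositive.

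The only difference is in how nonnegativity of the integrand $(\nabla u-\nabla v)\cdot(|\nabla u|^2\nabla u-|\nabla v|^2\nabla v)$ is obtained. The paper rewrites the flux difference via the vectorial identity
\[
|\nabla u|^2\nabla u-|\nabla v|^2\nabla v=\tfrac{1}{2}\bigl(|\nabla u|^2+|\nabla v|^2+|\nabla u+\nabla v|^2\bigr)\nabla w,
\]
whereas you prove directly the scalar monotonicity inequality $(a-b)\cdot(|a|^2a-|b|^2b)\ge 0$ via Cauchy--Schwarz and the factorization $(|a|-|b|)^2(|a|^2+|a||b|+|b|^2)$. Your route is in fact the more robust one: the paper's vectorial identity is valid in one dimension but \emph{fails} for general vectors in $\R^2$ (take $a=(1,0)$, $b=(0,1)$: the left side is $(1,-1)$ while the right side is $(2,-2)$). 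What survives---and is all that is needed---is exactly the scalar inequality you establish, so your argument both matches the paper's intent and repairs a slip in its presentation.
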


\begin{proof}
Set $u(x,y,t)$ and $v(x,y,t)$ to be the solutions of (\ref{MBE2D_nonlinear})
with the initial data $u(\cdot,\cdot,0)=u_0$ and $v(\cdot,\cdot,0)=v_0$, respectively.
Let $w=u-v$, then we have
$$w_t=\nabla\cdot(|\nabla u|^2\nabla u-|\nabla v|^2\nabla v)=\frac{1}{2}\nabla\cdot\big((|\nabla u|^2+|\nabla v|^2+|\nabla u+\nabla v|^2)\nabla w\big).$$
Taking the inner-product with $w$ and noting the periodicity, we obtain
$$\daoshu{}{t}\|w(t)\|_{L^2(\Omega)}^2=-\int_\Omega(|\nabla u|^2+|\nabla v|^2+|\nabla u+\nabla v|^2)|\nabla w|^2\,\d x\d y\le 0,$$
which leads to
$$\|w(t)\|_{L^2(\Omega)}\le\|w(0)\|_{L^2(\Omega)},\quad\forall t>0,$$
that is,$$\|u(t)-v(t)\|_{L^2(\Omega)}\le\|u(0)-v(0)\|_{L^2(\Omega)},\quad\forall t>0,$$
which completes the proof.\qquad
\end{proof}

In practice, the exact solution operators $\SN$ and $\SL$ are to be replaced by their numerical approximations.
In the following two subsections, we present the numerical methods given in \cite{ChKuQuTa15},
while the algorithm for the nonlinear part is a little different.

\subsection{Center-difference scheme for the equation (\ref{MBE2D_nonlinear})}\label{ss_nonlinear}

Using the method of lines, the nonlinear subproblem (\ref{MBE2D_nonlinear}) can be reduced to a system of ODEs,
which can be efficiently and accurately integrated by a stable explicit ODE solver.
Here we adopt the fourth-order-difference to discrete the space,
and choose the third-order strong stability preserving Runge-Kutta (SSP-RK3) method \cite{GoShTa01} as the ODE solver.

Introducing a spatial scale $h=2L/J$, where $J=2N$ is a positive even integer,
the grid nodes are defined as $(x_j,y_k)=(jh,kh)$, $j,k=1,2,\dots,J$.
The fourth-order semi-discrete scheme for (\ref{MBE2D_nonlinear}) can be written as \cite{KuLe00}
\begin{align}
\daoshu{u_{j,k}(t)}{t} & =\frac{-F_{j+2,j,k}(t)+8F_{j+1,j,k}(t)-8F_{j-1,j,k}(t)+F_{j-2,j,k}(t)}{12h}\nonumber\\
& \qquad +\frac{-G_{j,k,k+2}(t)+8G_{j,k,k+1}(t)-8G_{j,k,k-1}(t)+G_{j,k,k-2}(t)}{12h},\label{nonlinear_ODE}
\end{align}
where
$$F_{j+\ell,j,k}=F((u_x)_{j+\ell,j,k},(u_y)_{j+\ell,j,k}),\ G_{j,k,k+\ell}=G((u_x)_{j,k,k+\ell},(u_y)_{j,k,k+\ell}),\ \ell=\pm1,\pm2,$$
here $F(p,q)=(p^2+q^2)p$, $G(p,q)=(p^2+q^2)q$, and
\begin{subequations}
\label{nonlinear_ODE_uxuy}
\begin{align}
(u_x)_{j+2,j,k} & =\frac{25u_{j+2,k}-48u_{j+1,k}+36u_{j,k}-16u_{j-1,k}+3u_{j-2,k}}{12h},\\
(u_x)_{j+1,j,k} & =\frac{3u_{j+2,k}+10u_{j+1,k}-18u_{j,k}+6u_{j-1,k}-u_{j-2,k}}{12h},\\
(u_x)_{j-1,j,k} & =\frac{u_{j+2,k}-6u_{j+1,k}+18u_{j,k}-10u_{j-1,k}-3u_{j-2,k}}{12h},\\
(u_x)_{j-2,j,k} & =\frac{-3u_{j+2,k}+16u_{j+1,k}-36u_{j,k}+48u_{j-1,k}-25u_{j-2,k}}{12h},\\
(u_y)_{j+\ell,j,k} & =\frac{-u_{j+\ell,k+2}+8u_{j+\ell,k+1}-8u_{j+\ell,k-1}+u_{j+\ell,k-2}}{12h},\ \ell=\pm1,\pm2,\\
(u_x)_{j,k,k+\ell} & =\frac{-u_{j+2,k+\ell}+8u_{j+1,k+\ell}-8u_{j-1,k+\ell}+u_{j-2,k+\ell}}{12h},\ \ell=\pm1,\pm2,\\
(u_y)_{j,k,k+2} & =\frac{25u_{j,k+2}-48u_{j,k+1}+36u_{j,k}-16u_{j,k-1}+3u_{j,k-2}}{12h},\\
(u_y)_{j,k,k+1} & =\frac{3u_{j,k+2}+10u_{j,k+1}-18u_{j,k}+6u_{j,k-1}-u_{j,k-2}}{12h},\\
(u_y)_{j,k,k-1} & =\frac{u_{j,k+2}-6u_{j,k+1}+18u_{j,k}-10u_{j,k-1}-3u_{j,k-2}}{12h},\\
(u_y)_{j,k,k-2} & =\frac{-3u_{j,k+2}+16u_{j,k+1}-36u_{j,k}+48u_{j,k-1}-25u_{j,k-2}}{12h}.
\end{align}
\end{subequations}
The fully-discrete scheme for (\ref{MBE2D_nonlinear}) is obtained by applying the SSP-RK3 method \cite{GoShTa01}.
This completes the numerical approximation of the operator $\mathcal{S_N}$.
We notice that our scheme (\ref{nonlinear_ODE})--(\ref{nonlinear_ODE_uxuy}) is fourth-order in space,
which is same as the scheme (2.7)--(2.9) given in \cite{ChKuQuTa15}.
In addition, our scheme is more compact than the scheme in \cite{ChKuQuTa15},
since the former utilizes 25 points while the latter 33, as proposed in Fig. \ref{fig_gridpoints}.

\begin{figure}[!htp]
\centering
\subfigure[The scheme (\ref{nonlinear_ODE})--(\ref{nonlinear_ODE_uxuy}) here.]{\includegraphics[scale=1.2]{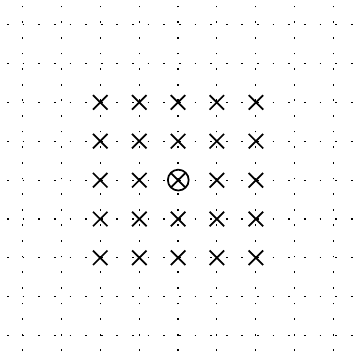}}\qquad\qquad
\subfigure[The scheme (2.7)--(2.9) in \cite{ChKuQuTa15}.]{\includegraphics[scale=1.2]{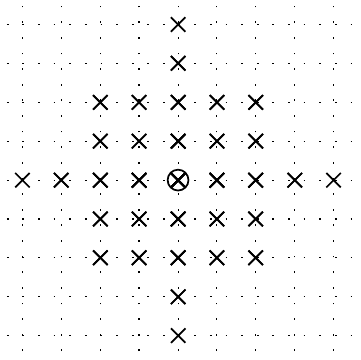}}
\caption{The ``$\times$'' represents the point involved in the scheme expanded at the ``$\circ$'' point.}\label{fig_gridpoints}
\end{figure}

According to the property of strong stability preserving,
the stability restriction of the SSP-RK3 method is identical to that of the forward Euler scheme.
We use the ``frozen coefficient'' strategy to analyze the stability of the forward Euler scheme
\begin{align}
\frac{u_{j,k}^{n+1}-u_{j,k}^n}{\tau} & =\frac{-F_{j+2,j,k}^n+8F_{j+1,j,k}^n-8F_{j-1,j,k}^n+F_{j-2,j,k}^n}{12h}\nonumber\\
& \qquad +\frac{-G_{j,k,k+2}^n+8G_{j,k,k+1}^n-8G_{j,k,k-1}^n+G_{j,k,k-2}^n}{12h},\label{nonlinear_forEuler}
\end{align}
where $F_{j+\ell,j,k}^n=F_{j+\ell,j}(t_n)$, $G_{j,k,k+\ell}^n=G_{j,k,k+\ell}(t_n)$, $\ell=\pm1,\pm2$, that is,
\begin{align*}
\frac{u_{j,k}^{n+1}-u_{j,k}^n}{\tau}
& =(|\nabla u|^2)_{j+2,j,k}^n\cdot\bigg[-\frac{(u_x)_{j+2,j,k}^n}{12h}\bigg]+(|\nabla u|^2)_{j+1,j,k}^n\cdot\bigg[\frac{2(u_x)_{j+1,j,k}^n}{3h}\bigg]\\
& \qquad +(|\nabla u|^2)_{j-1,j,k}^n\cdot\bigg[-\frac{2(u_x)_{j-1,j,k}^n}{3h}\bigg]+(|\nabla u|^2)_{j-2,j,k}^n\cdot\bigg[\frac{(u_x)_{j-2,j,k}^n}{12h}\bigg]\\
& \qquad +(|\nabla u|^2)_{j,k,k+2}^n\cdot\bigg[-\frac{(u_y)_{j,k,k+2}^n}{12h}\bigg]+(|\nabla u|^2)_{j,k,k+1}^n\cdot\bigg[\frac{2(u_y)_{j,k,k+1}^n}{3h}\bigg]\\
& \qquad +(|\nabla u|^2)_{j,k,k-1}^n\cdot\bigg[-\frac{2(u_y)_{j,k,k-1}^n}{3h}\bigg]+(|\nabla u|^2)_{j,k,k-2}^n\cdot\bigg[\frac{(u_y)_{j,k,k-2}^n}{12h}\bigg].
\end{align*}
It is observed that the terms $(u_x)_{j+\ell,j,k}^n$, $(u_y)_{j+\ell,j,k}^n$, $(u_x)_{j,k,k+\ell}^n$, $(u_y)_{j,k,k+\ell}^n$
approximate the values $u_x(x_{j+\ell},y_k,t_n)$, $u_y(x_{j+\ell},y_k,t_n)$, $u_x(x_j,y_{k+\ell},t_n)$, $u_y(x_j,y_{k+\ell},t_n)$
with the error $\mathcal{O}(h^4)$, respectively.
Freezing the prefactors of the square bracket terms by the constant
$$A=\max\{(|\nabla u|^2)_{j+\ell,j,k}^n,(|\nabla u|^2)_{j,k,k+\ell}^n:\ell=\pm1,\pm2\},$$
we obtain the following linear scheme:
\begin{align*}
\frac{u_{j,k}^{n+1}-u_{j,k}^n}{\tau} & =A\cdot\bigg(\frac{-u_{j+2,k}^n+16u_{j+1,k}^n-30u_{j,k}^n+16u_{j-1,k}^n-u_{j-2,k}^n}{12h^2}\\
& \qquad\qquad +\frac{-u_{j,k+2}^n+16u_{j,k+1}^n-30u_{j,k}^n+16u_{j,k-1}^n-u_{j,k-2}^n}{12h^2}\bigg),
\end{align*}
which can be transformed into the following form:
\begin{align}
u_{j,k}^{n+1} & =(1-5r)u_{j,k}^n+\frac{4r}{3}(u_{j+1,k}^n+u_{j-1,k}^n+u_{j,k+1}^n+u_{j,k-1}^n)\nonumber\\
& \qquad\qquad -\frac{r}{12}(u_{j+2,k}^n+u_{j-2,k}^n+u_{j,k+2}^n+u_{j,k-2}^n),\label{nonlinear_forEuler_expform}
\end{align}
where $r=A\tau/h^2$.
Using the Fourier analysis method, the symbol of the difference scheme (\ref{nonlinear_forEuler_expform}) is
$$\rho(\sigma_1,\sigma_2)=1-\frac{r}{3}[(1-\cos\sigma_1h)(7-\cos\sigma_1h)+(1-\cos\sigma_2h)(7-\cos\sigma_2h)].$$
Therefore, $|\rho(\sigma_1,\sigma_2)|\le 1$ if and only if
$$0\le r\le\frac{6}{(1-\cos\sigma_1h)(7-\cos\sigma_1h)+(1-\cos\sigma_2h)(7-\cos\sigma_2h)}.$$
As $(1-c)(7-c)\in[0,16]$ when $c\in[-1,1]$, we obtain $r\le\dfrac{3}{16}$, namely,
\begin{equation}
\label{nonlinear_stability}
\tau\le\frac{3}{16A}h^2.
\end{equation}
Obviously, this is a sufficient and unnecessary condition for the stability of the Euler scheme (\ref{nonlinear_forEuler}),
and thus, of the scheme (\ref{nonlinear_ODE}) combined with the SSP-RK3 solver.

\subsection{Pseudo-spectral method for equation (\ref{MBE2D_linear})}\label{ss_linear}

In \cite{ChKuQuTa15}, the equation (\ref{MBE2D_linear}) is solved by the pseudo-spectral method via the following procedure.
They first use the FFT algorithm to compute the discrete Fourier coefficients $\{\widetilde{u}_{pq}(t)\}$ from the point values $\{u_{j,k}(t)\}$.
Then they calculate $\widetilde{u}_{pq}(t+\tau)=\e^{\lambda_{pq}\tau}\widetilde{u}_{pq}(t)$, where
$$\lambda_{pq}=\frac{\pi^2(p^2+q^2)}{L^2}-\delta\bigg(\frac{\pi^2(p^2+q^2)}{L^2}\bigg)^2.$$
Finally they recover the point values of the solution at the new time level, $\{u_{j,k}(t+\tau)\}$,
from the discrete Fourier coefficients $\{\widetilde{u}_{pq}(t+\tau)\}$ using the inverse FFT algorithm.

For the self-consistent of our statement, here we give some formulas to be used in the next section.
For the continuous function $u(x,y,t)$, there exists the Fourier series in the complex form at time $t$:
\begin{equation}
\label{linear2D_series}
u(x,y,t)=\sum_{p=-\infty}^\infty\sum_{q=-\infty}^\infty\widehat{u}_{pq}(t)\e^{\frac{\i\pi}{L}(px+qy)},
\end{equation}
where the Fourier coefficients are given by
\begin{equation}
\label{linear2D_coef}
\widehat{u}_{pq}(t)=\frac{1}{4L^2}\int_\Omega u(x,y,t)\e^{-\frac{\i\pi}{L}(px+qy)}\,\d x\d y,\quad p,q=0,\pm1,\pm2,\dots.
\end{equation}
It is easy to see that the Fourier coefficients satisfy the following ODEs:
$$\daoshu{}{t}\widehat{u}_{pq}(t)=\lambda_{pq}\widehat{u}_{pq}(t),
\quad\lambda_{pq}=\frac{\pi^2(p^2+q^2)}{L^2}-\delta\bigg(\frac{\pi^2(p^2+q^2)}{L^2}\bigg)^2.$$
The exact solution is $$\widehat{u}_{pq}(t+\tau)=\e^{\lambda_{pq}\tau}\widehat{u}_{pq}(t),\quad p,q=0,\pm1,\pm2,\dots,$$
and then
\begin{equation}
\label{linear2D_solution_series}
u(x,y,t+\tau)=\sum_{p=-\infty}^\infty\sum_{q=-\infty}^\infty\widehat{u}_{pq}(t+\tau)\e^{\frac{\i\pi}{L}(px+qy)}.
\end{equation}
It is easy to see that the pseudo-spectral method purposed in \cite{ChKuQuTa15} is the discrete form of the procedure above.

In the theory of the spectral method \cite{ShTa06,ShTaWa11}, the FFT and the inverse FFT algorithm can be expressed as
\begin{equation}
\label{lieanr2D_discoef}
\widetilde{u}_{pq}(t)=\frac{1}{J^2c_pc_q}\sum_{j=1}^J\sum_{k=1}^Ju(x_j,y_k,t)\e^{-\frac{\i\pi}{L}(px_j+qy_k)},\quad p,q=-N,\dots,N,
\end{equation}
and
\begin{equation}
\label{lieanr2D_disseries}
u_{j,k}(t)=\sum_{p=-N}^N\sum_{q=-N}^N\widetilde{u}_{pq}(t)\e^{\frac{\i\pi}{L}(px_j+qy_k)},\quad j,k=1,2,\dots,J,
\end{equation}
where $c_p$ and $c_q$ are defined as
\begin{equation}
\label{coef_ck}
c_r=
\begin{dcases}
2, & |r|=N,\\
1, & |r|<N.
\end{dcases}
\end{equation}
The pseudo-spectral procedure can be expressed as
$$u(t+\tau)=\mathscr{F}_d^{-1}\{\e^{\lambda_{pq}\tau}\mathscr{F}_d[u(t)](p,q)\},$$
where $u(t)$ is the matrix with the elements $\{u_{j,k}(t):j,k=1,2,\dots,J\}$,
$\mathscr{F}_d$ and $\mathscr{F}_d^{-1}$ are the discrete Fourier transform and the inverse transform, respectively.
Using the Parseval's formula and the fact that $|\e^{\lambda_{pq}\tau}|\le\e^{\frac{\tau}{4\delta}}$ (for any $p,q$),
we obtain
\begin{equation}
\label{linear_stability}
\|u(t+\tau)\|\le\e^{\frac{\tau}{4\delta}}\|u(t)\|,
\end{equation}
where $\|\cdot\|$ represents the discrete $L^2$-norm, that is,
$$\|u\|=\sqrt{h^2\sum_{j=1}^J\sum_{k=1}^J(u_{j,k})^2}.$$
The inequality (\ref{linear_stability}) implies the stability of the pseudo-spectral procedure.

\section{Error analysis and accuracy tests}

Here we investigate the convergence rate of the fast explicit operator splitting method given above,
and then conduct some numerical accuracy tests to verify our results.

\subsection{Error estimate}

We denote by $\widetilde{u}(x,y,t)$ the splitting solution satisfying exactly the scheme (\ref{StrangOS_LNL}),
and write $U_{jk}^n:=u(x_j,y_k,t_n)$, $\widetilde{u}^n:=\widetilde{u}(\cdot,\cdot,t_n)$ and $\widetilde{U}_{jk}^n:=\widetilde{u}(x_j,y_k,t_n)$.
We denote by $\SNh$ and $\SLh$ the discrete approximations of the operators $\mathcal{S_N}$ and $\mathcal{S_L}$, respectively,
and by $u_{jk}^n$ the numerical approximation of $\widetilde{U}_{jk}^n$, satisfying
$$u^{n+1}=\SLh\Big(\frac{\tau}{2}\Big)\SNh(\tau)\SLh\Big(\frac{\tau}{2}\Big)u^n.$$
Defining a sample operator $I^h:\Lp^2(\Omega)\to\R^{J\times J}$ as $I^hu=(u(x_j,y_k))_{jk}$,
we have $\widetilde{U}^n=I^h\widetilde{u}^n$, where $\Lp^2(\Omega)=\{u\in L^2(\Omega)\,|\,u~\text{is $\Omega$-periodic}\}$.
For the simple notations, we omit the $\frac{\tau}{2}$ or $\tau$ following the symbols $\SN$, $\SL$, $\SNh$, or $\SLh$ below.

To estimate the error, we need some lemmas.
For the simplicity, we write $\SN u$ to mean $\SN(u)$ and $\SNh v$ to mean $\SNh(v)$, though the operators $\SN$ and $\SNh$ are actually nonlinear.
We restate the accuracy of $\SNh$ obtained in Section \ref{ss_nonlinear}.

\begin{lemma}
\label{lem_error_SNh}
Under the condition {\rm(\ref{nonlinear_stability})}, there exists a positive constant $C_1$, independent on $\tau$ and $h$, such that
$$\|I^h\SN u-\SNh I^hu\|\le C_1\tau(\tau^3+h^4),\quad\forall u\in\Hp^2(\Omega).$$
\end{lemma}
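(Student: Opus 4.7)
The plan is to decompose the one-step local error into a spatial semi-discretization error and a time-integration error, then estimate each separately. Writing $U(t):=I^h\SN(t)u$ for the grid values of the exact nonlinear flow and letting $v(t)$ denote the solution of the semi-discrete ODE system (\ref{nonlinear_ODE})--(\ref{nonlinear_ODE_uxuy}) with initial data $v(0)=I^h u$, the numerical output $\SNh I^h u$ is exactly one SSP-RK3 step of size $\tau$ applied to $v$. The triangle inequality then gives $\|I^h\SN u-\SNh I^h u\|\le\|U(\tau)-v(\tau)\|+\|v(\tau)-\SNh I^h u\|$, so it suffices to control the spatial error $\|U(\tau)-v(\tau)\|$ and the temporal error $\|v(\tau)-\SNh I^h u\|$.

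For the spatial term, Taylor expansion (using smoothness of $\SN(t)u$ supplied by the regularity theory of \cite{LiLi03}) shows that each divided difference in (\ref{nonlinear_ODE_uxuy}) approximates its target pointwise derivative with error $\mathcal{O}(h^4)$; composing with the smooth nonlinearities $F$ and $G$ preserves this order, so plugging $U(t)$ into the right-hand side of (\ref{nonlinear_ODE}) produces a truncation residual $\eta(t)$ with $\|\eta(t)\|=\mathcal{O}(h^4)$ uniformly on $[0,\tau]$. Subtracting the ODE satisfied by $v$ from the one satisfied (up to $\eta$) by $U$, testing against $U-v$ in the discrete $L^2$ inner product, and mimicking the cancellation in the proof of Proposition \ref{prop_stability_SN} (the discrete nonlinear flux contributes a nonpositive quadratic form, leaving only a locally Lipschitz remainder), Gronwall's inequality yields $\|U(t)-v(t)\|\le C_s t h^4$ and in particular $\|U(\tau)-v(\tau)\|\le C_s\tau h^4$.

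For the temporal term, SSP-RK3 is third-order accurate, so its single-step local truncation error applied to a smooth ODE flow is $\mathcal{O}(\tau^4)$. Under the CFL restriction (\ref{nonlinear_stability}) the intermediate stages remain controlled (the linearised symbol satisfies $|\rho|\le 1$), and the time derivatives of $v$ inherit bounds from the temporal derivatives of the exact PDE solution so that the leading constant in the Taylor comparison is independent of $h$. A standard RK3 Butcher-tableau expansion then gives $\|v(\tau)-\SNh I^h u\|\le C_t\tau^4$. Adding the two estimates produces $\|I^h\SN u-\SNh I^h u\|\le C_s\tau h^4+C_t\tau^4\le C_1\tau(\tau^3+h^4)$ with $C_1:=\max(C_s,C_t)$, as required.

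The main obstacle is handling the nonlinearity in the spatial estimate: the Gronwall step needs a local Lipschitz bound for the discrete nonlinear operator, hence uniform $L^\infty$-bounds on the discrete gradients of both $U$ and $v$. The bound on $U$ follows from smoothness of $\SN(t)u$; the bound on $v$ is obtained either by an induction on $t\in[0,\tau]$ using the estimate being proved, or by a discrete energy argument paralleling Proposition \ref{prop_stability_SN}. A secondary issue is that the fourth-order stencils actually require $C^5$-type smoothness rather than the $\Hp^2$ regularity written in the hypothesis; as is standard one appeals to the instantaneous smoothing from the fourth-order linear part of the full MBE equation to upgrade the regularity of the underlying exact solution before applying the Taylor expansions.
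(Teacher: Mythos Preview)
The paper does not actually prove this lemma. Immediately before the statement it says ``We restate the accuracy of $\SNh$ obtained in Section~\ref{ss_nonlinear}'', and Section~\ref{ss_nonlinear} only records that the stencil~(\ref{nonlinear_ODE})--(\ref{nonlinear_ODE_uxuy}) is fourth-order in space and that SSP-RK3 is third-order in time; the local one-step error $\tau(\tau^3+h^4)$ is then asserted as the obvious combination of these two facts, with no separate argument. Your proposal is therefore not competing with a proof in the paper---it is the proof the paper omits---and your decomposition into a spatial semi-discretization error plus an RK3 local error, followed by a Gronwall step, is exactly the standard route one would take to justify the assertion.

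One point to adjust: your closing remark that the missing regularity can be recovered by ``instantaneous smoothing from the fourth-order linear part of the full MBE equation'' does not apply here, because the operator $\SN$ solves only the second-order nonlinear equation~(\ref{MBE2D_nonlinear}), with no biharmonic term and hence no parabolic smoothing of that strength. The paper's own workaround (see the Remark following Lemma~\ref{lem_stability_SNh}) is instead to claim that $\SN(t)$ \emph{preserves} $\Hp^m$ regularity, so that if one works with $u\in\Hp^m$ for $m$ large enough the Taylor expansions go through; the ``$\forall u\in\Hp^2$'' in the lemma should really be read with that stronger implicit assumption. Apart from this, your Lipschitz/Gronwall handling of the nonlinearity and the uniform gradient bound obtained by bootstrapping from the exact flow are the right ingredients, and the argument is sound.
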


We can derive the stability of $\SNh$ by using the result of $\SN$.

\begin{lemma}
\label{lem_stability_SNh}
Given $m\in\N$.
Under the condition {\rm(\ref{nonlinear_stability})}, there exists a positive constant $C_2$, independent on $\tau$ and $h$, such that
$$\|\SNh v-\SNh w\|\le\|v-w\|+2C_1\tau(\tau^3+h^4)+C_2h^m,\quad\forall v,w\in\R^{J\times J}.$$
\end{lemma}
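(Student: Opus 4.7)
The plan is to reduce the claimed discrete contraction estimate for $\SNh$ to the continuous $L^2$-contraction of $\SN$ provided by Proposition~\ref{prop_stability_SN}, using the accuracy bound of Lemma~\ref{lem_error_SNh} as the bridge between the two settings. Given arbitrary $v,w\in\R^{J\times J}$, I would introduce their trigonometric interpolants $\tilde v,\tilde w\in\Hp^2(\Omega)$ (indeed smooth periodic functions), built in the spirit of (\ref{lieanr2D_disseries}), which satisfy $I^h\tilde v=v$ and $I^h\tilde w=w$. This promotes the discrete estimate to one in which $\SN$ can be applied to genuine $\Hp^2$ data.

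With this setup, the triangle inequality with intermediates $I^h\SN\tilde v$ and $I^h\SN\tilde w$ gives
$$\|\SNh v-\SNh w\|\le\|\SNh I^h\tilde v-I^h\SN\tilde v\|+\|I^h\SN\tilde v-I^h\SN\tilde w\|+\|I^h\SN\tilde w-\SNh I^h\tilde w\|.$$
The first and third terms are each controlled directly by Lemma~\ref{lem_error_SNh} applied to $\tilde v$ and $\tilde w$, producing precisely the $2C_1\tau(\tau^3+h^4)$ contribution appearing in the statement.

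The middle term is handled by passing from the discrete $L^2$-norm back to the continuous one. Since $\SN\tilde v$ and $\SN\tilde w$ remain smooth periodic functions, the trapezoidal quadrature defining $\|\cdot\|$ converges spectrally, so for any prescribed $m\in\N$ there is a constant (depending on $m$ and on high-order norms of $\tilde v,\tilde w$, but not on $\tau,h$) with
$$\|I^h\SN\tilde v-I^h\SN\tilde w\|\le\|\SN\tilde v-\SN\tilde w\|_{L^2(\Omega)}+C_2 h^m.$$
Proposition~\ref{prop_stability_SN} dominates the $L^2$-term by $\|\tilde v-\tilde w\|_{L^2(\Omega)}$. Finally, $\tilde v-\tilde w$ is a trigonometric polynomial of degree at most $N$ in each variable, so the Parseval identity for the DFT (\ref{lieanr2D_discoef})--(\ref{lieanr2D_disseries}), in which the weights $c_p,c_q$ of (\ref{coef_ck}) already absorb the aliased modes $|p|=N$, $|q|=N$, yields the exact equality $\|\tilde v-\tilde w\|_{L^2(\Omega)}=\|v-w\|$. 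Combining these ingredients gives the claim.

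The main obstacle is the spectral-accuracy step: it requires that the nonlinear evolution $\SN$ preserve enough regularity that the trapezoidal-rule error on $|\SN\tilde v-\SN\tilde w|^2$ decays faster than any power of $h$. This rests on regularity propagation for the degenerate parabolic equation (\ref{MBE2D_nonlinear}), and it is exactly what is hidden inside $C_2$: the constant is independent of $\tau$ and $h$ but depends on high-order smoothness bounds for $\tilde v$ and $\tilde w$ on $[0,\tau]$. The remaining pieces (Parseval and Lemma~\ref{lem_error_SNh}) are essentially bookkeeping once the interpolants $\tilde v,\tilde w$ have been inserted.
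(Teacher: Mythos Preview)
Your proof follows essentially the same route as the paper's: lift $v,w$ to trigonometric interpolants $\tilde v,\tilde w$ with $I^h\tilde v=v$, $I^h\tilde w=w$, split via the triangle inequality with $I^h\SN\tilde v$ and $I^h\SN\tilde w$ as intermediates, bound the two outer terms by Lemma~\ref{lem_error_SNh}, and reduce the middle term to Proposition~\ref{prop_stability_SN} using spectral accuracy of the trapezoidal rule. One small overclaim: the identity $\|\tilde v-\tilde w\|_{L^2(\Omega)}=\|v-w\|$ is not exact, because the Nyquist modes $|p|=N$ or $|q|=N$ alias on the grid (the weights $c_p,c_q$ in (\ref{lieanr2D_discoef}) split one grid mode into two Fourier modes), so the discrete norm is in general slightly \emph{larger}; what does hold is the inequality $\|\tilde v-\tilde w\|_{L^2(\Omega)}\le\|v-w\|$, and that is all you need. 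The paper avoids this subtlety by simply allowing an additional $Ch^m$ at that step as well, absorbing it into $C_2$.
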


\begin{proof}
Let $\overline{v}$ be some function, belonging to $\Hp^m(\Omega)$, such that $I^h\overline{v}=v$,
for example, the two-dimensional trigonometric interpolation of $v$ in $\Omega$.
Similarly, let $\overline{w}\in\Hp^m(\Omega)$ such that $I^h\overline{w}=w$.
Using Lemma \ref{lem_error_SNh}, we obtain
\begin{align*}
\|\SNh v-\SNh w\| & \le\|\SNh v-I^h\SN\overline{v}\|+\|I^h\SN\overline{v}-I^h\SN\overline{w}\|+\|I^h\SN\overline{w}-\SNh w\|\\
& \le C_1\tau(\tau^3+h^4)+\|I^h(\SN\overline{v}-\SN\overline{w})\|+C_1\tau(\tau^3+h^4).
\end{align*}
Since the $L^2$-norm of an $\Omega$-periodic function on $\Omega$ can be approximated by the discrete $L^2$-norm with spectral accuracy \cite{Tre00},
using Proposition \ref{prop_stability_SN}, we have
\begin{align*}
\|I^h(\SN\overline{v}-\SN\overline{w})\| & \le\|\SN\overline{v}-\SN\overline{w}\|_{L^2(\Omega)}+Ch^m\\
& \le\|\overline{v}-\overline{w}\|_{L^2(\Omega)}+Ch^m\le\|v-w\|+C_2h^m.
\end{align*}
Therefore, we obtain
$$\|\SNh v-\SNh w\|\le\|v-w\|+2C_1\tau(\tau^3+h^4)+C_2h^m,$$
which completes the proof.\qquad
\end{proof}

\begin{remark}
In {\rm\cite{LiLi03}}, the authors have proved the regularity of the solutions to the MBE equation {\rm(\ref{MBE2D_model1})}
using the standard technique of Galerkin approximations.
It says that $u(t)\in\Hp^m(\Omega)$ for any $t>0$ if $u(0)\in\Hp^m(\Omega)$.
With the similar proof, we can obtain $\SN(t)u\in\Hp^m(\Omega)$ and $\SL(t)u\in\Hp^m(\Omega)$ for any $t>0$ provided $u\in\Hp^m(\Omega)$.
Here we omit the detailed proofs and just use the results directly above.
\end{remark}

The stability inequality (\ref{linear_stability}) can be rewritten in the following form.

\begin{lemma}
\label{lem_stability_SLh}
$\|\SLh v\|\le\e^{\frac{\tau}{4\delta}}\|v\|$, $\forall v\in\R^{J\times J}$.
\end{lemma}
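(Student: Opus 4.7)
The plan is to observe that this lemma is essentially a direct reformulation of the stability inequality (\ref{linear_stability}) already derived in Section \ref{ss_linear}. Concretely, the discrete operator $\SLh$ is defined by the diagram
$$v \xrightarrow{\mathscr{F}_d} \{\widetilde{v}_{pq}\} \xrightarrow{\times\e^{\lambda_{pq}\tau}} \{\e^{\lambda_{pq}\tau}\widetilde{v}_{pq}\} \xrightarrow{\mathscr{F}_d^{-1}} \SLh v,$$
so everything reduces to a pointwise bound on the symbol $\e^{\lambda_{pq}\tau}$ combined with the discrete Parseval identity. The proof is a three-line calculation, and no real obstacle arises.

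First I would recall that $\lambda_{pq}=s-\delta s^2$ with $s=\pi^2(p^2+q^2)/L^2\ge 0$. Maximizing the concave function $s\mapsto s-\delta s^2$ over $s\ge 0$ gives the maximum $1/(4\delta)$ at $s=1/(2\delta)$; hence $\lambda_{pq}\le 1/(4\delta)$ and consequently $|\e^{\lambda_{pq}\tau}|\le\e^{\tau/(4\delta)}$ for every $(p,q)$. Second, I would invoke the discrete Parseval identity attached to the transforms (\ref{lieanr2D_discoef})--(\ref{lieanr2D_disseries}), which says that the discrete $L^2$-norm $\|\cdot\|$ is (up to a fixed normalization depending on $c_p$ and $h$) equivalent to the $\ell^2$-norm of the coefficient array $\{\widetilde{v}_{pq}\}$. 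Since $\SLh$ acts by multiplication in the frequency domain, this gives
$$\|\SLh v\|^2 = \text{const}\cdot\sum_{p,q}|\e^{\lambda_{pq}\tau}\widetilde{v}_{pq}|^2\le \e^{\tau/(2\delta)}\cdot\text{const}\cdot\sum_{p,q}|\widetilde{v}_{pq}|^2 = \e^{\tau/(2\delta)}\|v\|^2,$$
and taking square roots yields the claimed bound.

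Since the argument is purely algebraic once Parseval is in place, there is no technical obstacle; the only care required is to keep the normalization constants in (\ref{lieanr2D_discoef}) consistent so that the Parseval identity comes out cleanly. In fact, the authors have already executed exactly this computation when deriving (\ref{linear_stability}), so the proof can legitimately be presented as a one-line invocation of that inequality with $u(t)=v$ and $u(t+\tau)=\SLh v$.
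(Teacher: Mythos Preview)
Your proposal is correct and matches the paper's approach exactly: the paper does not give a separate proof for this lemma but simply introduces it with the sentence ``The stability inequality (\ref{linear_stability}) can be rewritten in the following form,'' and (\ref{linear_stability}) was obtained precisely via Parseval's identity together with the bound $|\e^{\lambda_{pq}\tau}|\le\e^{\tau/(4\delta)}$ that you derive by maximizing $s-\delta s^2$. Your final remark that the proof is a one-line invocation of (\ref{linear_stability}) is exactly what the paper does.
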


The error estimate of the operator $\SLh$ defined in Section \ref{ss_linear} can be proved in the framework of spectral method.

\begin{lemma}
\label{lem_error_SLh}
Given $m\in\N$ and $m>1$.
There exists a positive constant $C_3$, independent on $\tau$ and $h$, such that
$$\|I^h\SL u-\SLh I^hu\|\le C_3|u|_m\e^{\frac{\tau}{4\delta}}h^{m},\quad\forall u\in\Hp^m(\Omega).$$
\end{lemma}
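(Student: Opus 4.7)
The plan is to exploit the fact that both $\SL$ and $\SLh$ act diagonally on the trigonometric basis, so that $I^h\SL u-\SLh I^hu$ is driven purely by the aliasing of the continuous Fourier coefficients onto the discrete ones. First I would sample the Fourier series (\ref{linear2D_solution_series}) of $\SL u$ at the grid nodes and reorganize by the standard aliasing identity: for $|p|,|q|\le N$,
\[
\mathscr{F}_d[I^h\SL u](p,q) = \frac{1}{c_p c_q}\sum^{*}_{(\ell,s)\in\Z^2}\e^{\lambda_{p+\ell J,q+sJ}\tau}\,\widehat{u}_{p+\ell J,q+sJ}(t),
\]
where $\sum^{*}$ denotes the aliasing sum that also incorporates the Nyquist cases $|p|=N$, $|q|=N$ via the convention (\ref{coef_ck}). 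Applying the same identity to $u$ itself produces the formula for $\widetilde{u}_{pq}$ without the exponential factor; combining with $\mathscr{F}_d[\SLh I^hu](p,q)=\e^{\lambda_{pq}\tau}\widetilde{u}_{pq}$ and subtracting cancels the $(\ell,s)=(0,0)$ contribution, leaving
\[
\mathscr{F}_d[I^h\SL u-\SLh I^hu](p,q) = \frac{1}{c_p c_q}\sum^{*}_{(\ell,s)\ne(0,0)}\bigl(\e^{\lambda_{p+\ell J,q+sJ}\tau}-\e^{\lambda_{pq}\tau}\bigr)\widehat{u}_{p+\ell J,q+sJ}.
\]

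Next I would apply the discrete Parseval identity to return to $\|\cdot\|$, use the uniform bound $|\e^{\lambda_{pq}\tau}|\le\e^{\tau/(4\delta)}$ already exploited in (\ref{linear_stability}) to pull out a global factor $\e^{\tau/(2\delta)}$, and split the inner sum by Cauchy--Schwarz into a product of an exponent-$(-m)$ factor and a Sobolev-weighted factor:
\[
\Bigl(\sum_{(\ell,s)\ne(0,0)}\!\!\bigl(|p+\ell J|^2+|q+sJ|^2\bigr)^{-m}\Bigr)\Bigl(\sum_{(\ell,s)\ne(0,0)}\!\!\bigl(|p+\ell J|^2+|q+sJ|^2\bigr)^{m}|\widehat{u}_{p+\ell J,q+sJ}|^2\Bigr).
\]
The hypothesis $m>1$ is precisely what makes $\sum_{(\ell,s)\ne(0,0)}(\ell^2+s^2)^{-m}$ converge, and since $(\ell,s)\ne(0,0)$ with $|p|,|q|\le N$ forces $|p+\ell J|^2+|q+sJ|^2\ge c(\ell^2+s^2)J^2$, the first factor is uniformly $O(J^{-2m})=O(h^{2m})$ in $(p,q)$.

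Finally, reindexing the outer sum over $|p|,|q|\le N$ together with the inner sum over $(\ell,s)\ne(0,0)$ as a single sum over the high-frequency indices $(p',q')\in\Z^2\setminus[-N,N]^2$, the remaining weighted contributions aggregate to at most a constant multiple of $|u|_m^2$. Combining all bounds yields $\|I^h\SL u-\SLh I^hu\|^2\le Ch^{2m}\e^{\tau/(2\delta)}|u|_m^2$, which gives the claim after taking square roots.

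The main obstacle is the careful bookkeeping at the Nyquist modes $|p|=N$ or $|q|=N$, where the convention $c_p=2$ in (\ref{coef_ck}) alters the standard aliasing identity; one must verify that the $c_pc_q$ weights line up consistently between the aliasing expansions of $\mathscr{F}_d[I^h\SL u]$ and of $\widetilde{u}_{pq}$ so that the $(\ell,s)=(0,0)$ term cancels cleanly. Once this is dispensed with, the remainder is a routine application of Parseval, Cauchy--Schwarz, and the summability of $(\ell^2+s^2)^{-m}$ for $m>1$.
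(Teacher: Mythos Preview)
Your argument is correct and reaches the same endpoint as the paper, but the organization is genuinely different and somewhat cleaner. The paper inserts two intermediate approximants (the $y$-truncated and then fully truncated Fourier partial sums $\overline{w}$, $\widehat{w}$) and splits the grid error into a ``truncation'' piece $A_1+A_2$ and an ``aliasing'' piece $B$, the latter being further decomposed into a principal aliasing sum $D_1$ plus two Nyquist remainders $D_2,D_3$; only in step~(iv) does the Cauchy--Schwarz argument with the weight $\bigl((p+2rN)^2+(q+2sN)^2\bigr)^{\pm m}$ appear. You instead take the discrete Fourier transform of $I^h\SL u$ and of $\SLh I^h u$ directly, observe that both are aliasing sums with the same $(\ell,s)=(0,0)$ term, and bound the difference as a single tail sum. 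In effect your aliasing expansion of $I^h\SL u$ absorbs the paper's $A_1+A_2$ into the high-frequency contributions $(\ell,s)\ne(0,0)$, so no separate truncation step is needed. What this buys you is a shorter proof with one estimate instead of five; what the paper's decomposition buys is that the Nyquist bookkeeping is explicit (their $D_2,D_3$ isolate the $|p|=N$, $|q|=N$ cases), whereas in your approach that bookkeeping is hidden inside the $\sum^{*}$ convention and must be checked by hand---exactly the obstacle you flag. One small point on your reindexing: when $(\ell,s)\ne(0,0)$ and $|p|,|q|\le N$, the shifted index $(p',q')=(p+\ell J,q+sJ)$ can hit the boundary $|p'|=N$ or $|q'|=N$, so the image is not literally $\Z^2\setminus[-N,N]^2$; but each such boundary index is covered at most a bounded number of times, so the overcount is harmless and the bound by $C|u|_m^2$ still holds.
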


\begin{proof}
We use the notations $w(x,y,t)$, $\widehat{w}(x,y,t)$ and $\widetilde{w}(x,y,t)$ representing
\begin{align*}
w(x,y,t) & =\sum_{p=-\infty}^\infty\sum_{q=-\infty}^\infty\widehat{u}_{pq}(t)\e^{\frac{\i\pi}{L}(px+qy)},\quad
\overline{w}(x,y,t)=\sum_{p=-\infty}^\infty\sum_{q=-N}^N\widehat{u}_{pq}(t)\e^{\frac{\i\pi}{L}(px+qy)},\\
\widehat{w}(x,y,t) & =\sum_{p=-N}^N\sum_{q=-N}^N\widehat{u}_{pq}(t)\e^{\frac{\i\pi}{L}(px+qy)},\quad
\widetilde{w}(x,y,t)=\sum_{p=-N}^N\sum_{q=-N}^N\widetilde{u}_{pq}(t)\e^{\frac{\i\pi}{L}(px+qy)},
\end{align*}
where $\widehat{u}_{pq}(t)$ and $\widetilde{u}_{pq}(t)$ are given by (\ref{linear2D_coef}) and (\ref{lieanr2D_discoef}), respectively,
Assuming that $w(\cdot,\cdot,0)=\overline{w}(\cdot,\cdot,0)=\widehat{w}(\cdot,\cdot,0)=\widetilde{w}(\cdot,\cdot,0)=u$, we know that
$$\|I^h\SL u-\SLh I^hu\|^2=h^2\sum_{j=1}^{2N}\sum_{k=1}^{2N}|w(x_j,y_k,\tau)-\widetilde{w}(x_j,y_k,\tau)|^2\le4A_1+4A_2+2B,$$
where
\begin{align*}
A_1 & =h^2\sum_{j=1}^{2N}\sum_{k=1}^{2N}|w(x_j,y_k,\tau)-\overline{w}(x_j,y_k,\tau)|^2,\\
A_2 & =h^2\sum_{j=1}^{2N}\sum_{k=1}^{2N}|\overline{w}(x_j,y_k,\tau)-\widehat{w}(x_j,y_k,\tau)|^2,\\
B & =h^2\sum_{j=1}^{2N}\sum_{k=1}^{2N}|\widehat{w}(x_j,y_k,\tau)-\widetilde{w}(x_j,y_k,\tau)|^2.
\end{align*}

We first estimate the terms $A_1$ and $A_2$.
Since
\begin{align*}
A_1 & =h^2\sum_{j=1}^{2N}\sum_{k=1}^{2N}\bigg|\sum_{p=-\infty}^\infty\sum_{|q|>N}\widehat{u}_{pq}(\tau)\e^{\frac{\i\pi}{L}(px_j+qy_k)}\bigg|^2\\
& =4L^2\sum_{p=-\infty}^\infty\sum_{|q|>N}|\widehat{u}_{pq}(\tau)|^2\\
& \le4L^2\e^\frac{\tau}{2\delta}\sum_{p=-\infty}^\infty\sum_{|q|>N}|\widehat{u}_{pq}(0)|^2\\
& \le4L^2\e^\frac{\tau}{2\delta}\sum_{p=-\infty}^\infty\bigg(N^{-2m}\sum_{|q|>N}q^{2m}|\widehat{u}_{pq}(0)|^2\bigg)\\
& =4L^2\e^\frac{\tau}{2\delta}N^{-2m}\sum_{p=-\infty}^\infty\sum_{|q|>N}q^{2m}|\widehat{u}_{pq}(0)|^2,
\end{align*}
and, similarly,
\begin{align*}
A_2 & =h^2\sum_{j=1}^{2N}\sum_{k=1}^{2N}\bigg|\sum_{q=-N}^N\sum_{|p|>N}\widehat{u}_{pq}(\tau)\e^{\frac{\i\pi}{L}(px_j+qy_k)}\bigg|^2\\
& \le4L^2\e^\frac{\tau}{2\delta}N^{-2m}\sum_{q=-N}^N\sum_{|p|>N}p^{2m}|\widehat{u}_{pq}(0)|^2,
\end{align*}
we obtain
\begin{align*}
A_1+A_2 & \le4L^2\e^\frac{\tau}{2\delta}N^{-2m}
\bigg(\sum_{p=-\infty}^\infty\sum_{|q|>N}q^{2m}|\widehat{u}_{pq}(0)|^2+\sum_{q=-N}^N\sum_{|p|>N}p^{2m}|\widehat{u}_{pq}(0)|^2\bigg)\\
& \le4L^2\e^\frac{\tau}{2\delta}N^{-2m}\sum_{p=-\infty}^\infty\sum_{q=-\infty}^\infty(p^{2m}+q^{2m})|\widehat{u}_{pq}(0)|^2\\
& \le4L^2\e^\frac{\tau}{2\delta}N^{-2m}\sum_{p=-\infty}^\infty\sum_{q=-\infty}^\infty(p^2+q^2)^m|\widehat{u}_{pq}(0)|^2\\
& =4L^2\e^\frac{\tau}{2\delta}N^{-2m}|u|_m^2,
\end{align*}
where $|\cdot|_m$ represents the semi-norm of $\Hp^m(\Omega)$.

We next estimate the term $B$.
It is easy to obtain
\begin{align*}
B & =h^2\sum_{j=1}^{2N}\sum_{k=1}^{2N}\bigg|\sum_{p=-N}^N\sum_{q=-N}^N(\widehat{u}_{pq}(\tau)-\widetilde{u}_{pq}(\tau))\e^{\frac{\i\pi}{L}(px+qy)}\bigg|^2\\
& =4L^2\sum_{p=-N}^N\sum_{q=-N}^N|\widehat{u}_{pq}(\tau)-\widetilde{u}_{pq}(\tau)|^2\\
& \le4L^2\e^\frac{\tau}{2\delta}\sum_{p=-N}^N\sum_{q=-N}^N|\widehat{u}_{pq}(0)-\widetilde{u}_{pq}(0)|^2.
\end{align*}
Now we look for the upper bound of $B$ via the following fourth steps.

(i) Magnify the sum $$D:=\sum_{p=-N}^N\sum_{q=-N}^N|\widehat{u}_{pq}-\widetilde{u}_{pq}|^2.$$
A direct calculation leads to
\begin{align*}
D & =\sum_{p=-N}^N\bigg(\sum_{q=-N+1}^{N-1}|\widehat{u}_{pq}-\widetilde{u}_{pq}|^2+\frac{1}{4}\sum_{q=\pm N}|2\widehat{u}_{pq}-2\widetilde{u}_{pq}|^2\bigg)\\
& \le\sum_{p=-N}^N\bigg(\sum_{q=-N+1}^{N-1}|\widehat{u}_{pq}-\widetilde{u}_{pq}|^2
+\frac{1}{2}\sum_{q=\pm N}|\widehat{u}_{pq}-2\widetilde{u}_{pq}|^2+\frac{1}{2}\sum_{q=\pm N}|\widehat{u}_{pq}|^2\bigg)\\
& \le\sum_{p=-N}^N\bigg(\sum_{q=-N}^N|\widehat{u}_{pq}-c_q\widetilde{u}_{pq}|^2+\frac{1}{2}\sum_{q=\pm N}|\widehat{u}_{pq}|^2\bigg)\\
& =\sum_{q=-N}^N\bigg(\sum_{p=-N}^N|\widehat{u}_{pq}-c_q\widetilde{u}_{pq}|^2\bigg)+\frac{1}{2}\sum_{p=-N}^N\sum_{q=\pm N}|\widehat{u}_{pq}|^2\\
& \le\sum_{q=-N}^N\bigg(\sum_{p=-N}^N|\widehat{u}_{pq}-c_pc_q\widetilde{u}_{pq}|^2+\frac{1}{2}\sum_{p=\pm N}|\widehat{u}_{pq}|^2\bigg)
+\frac{1}{2}\sum_{p=-N}^N\sum_{q=\pm N}|\widehat{u}_{pq}|^2\\
& =\sum_{p=-N}^N\sum_{q=-N}^N|\widehat{u}_{pq}-c_pc_q\widetilde{u}_{pq}|^2
+\frac{1}{2}\sum_{q=-N}^N\sum_{p=\pm N}|\widehat{u}_{pq}|^2+\frac{1}{2}\sum_{p=-N}^N\sum_{q=\pm N}|\widehat{u}_{pq}|^2\\
& =:D_1+D_2+D_3,
\end{align*}
where $c_p$ and $c_q$ are defined as (\ref{coef_ck}).

(ii) Estimate the term $D_2+D_3$.
Since
\begin{align*}
D_2 & =\frac{1}{2}\sum_{q=-N}^N\sum_{p=\pm N}|\widehat{u}_{pq}|^2
\le\frac{1}{2}\sum_{q=-N}^N\bigg(N^{-2m}\sum_{|p|\ge N}p^{2m}|\widehat{u}_{pq}|^2\bigg)\\
& =\frac{1}{2}N^{-2m}\sum_{q=-N}^N\sum_{|p|\ge N}p^{2m}|\widehat{u}_{pq}|^2
\le\frac{1}{2}N^{-2m}\sum_{q=-\infty}^\infty\sum_{p=-\infty}^\infty p^{2m}|\widehat{u}_{pq}|^2,
\end{align*}
and, similarly,
$$D_3\le\frac{1}{2}N^{-2m}\sum_{p=-\infty}^\infty\sum_{q=-\infty}^\infty q^{2m}|\widehat{u}_{pq}|^2,$$
we obtain
$$D_2+D_3\le\frac{1}{2}N^{-2m}\sum_{q=-\infty}^\infty\sum_{p=-\infty}^\infty(p^2+q^2)^m|\widehat{u}_{pq}|^2=\frac{1}{2}N^{-2m}|u|_m^2.$$

(iii) To estimate the term $D_1$, we first prove that
\begin{equation}
\label{aliaingformula}
c_pc_q\widetilde{u}_{pq}=\widehat{u}_{pq}+\sum_{r^2+s^2\not=0}\widehat{u}_{p+2rN,q+2sN}.
\end{equation}
In fact, substituting (\ref{linear2D_series}) into (\ref{lieanr2D_discoef}), we have
\begin{align*}
c_pc_q\widetilde{u}_{pq}
& =\frac{1}{4N^2}\sum_{j=1}^{2N}\sum_{k=1}^{2N}
\bigg(\sum_{r=-\infty}^\infty\sum_{s=-\infty}^\infty\widehat{u}_{rs}\e^{\frac{\i\pi}{L}(rx_j+sy_k)}\bigg)\e^{-\frac{\i\pi}{L}(px_j+qy_k)}\\
& =\frac{1}{4N^2}\sum_{j=1}^{2N}\sum_{k=1}^{2N}\sum_{r=-\infty}^\infty\sum_{s=-\infty}^\infty\widehat{u}_{rs}\e^{\frac{\i\pi}{L}((r-p)x_j+(s-q)y_k)}\\
& =\frac{1}{4N^2}\sum_{r=-\infty}^\infty\sum_{s=-\infty}^\infty\widehat{u}_{rs}
\sum_{j=1}^{2N}\e^{\frac{\i\pi}{L}(r-p)x_j}\sum_{k=1}^{2N}\e^{\frac{\i\pi}{L}(s-q)y_k}\\
& =\sum_{r=-\infty}^\infty\sum_{s=-\infty}^\infty\widehat{u}_{p+2rN,q+2sN}\\
& =\widehat{u}_{pq}+\sum_{r^2+s^2\not=0}\widehat{u}_{p+2rN,q+2sN},
\end{align*}
since
\[
\sum_{j=1}^{2N}\e^{\frac{\i\pi}{L}(r-p)x_j}=
\begin{dcases}
2N, & r-p=2lN,\\
0, & r-p\not=2lN,
\end{dcases}\quad\text{here $l$ is an integer.}
\]

(iv) Estimate the term $D_1$.
Using the formula (\ref{aliaingformula}) and the Cauchy-Schwarz inequality, we have
\begin{align*}
D_1 & =\sum_{p=-N}^N\sum_{q=-N}^N\bigg|\sum_{r^2+s^2\not=0}\widehat{u}_{p+2rN,q+2sN}\bigg|^2\\
& \le \sum_{p=-N}^N\sum_{q=-N}^N\bigg\{\bigg(\sum_{r^2+s^2\not=0}\big[(p+2rN)^2+(q+2sN)^2\big]^{-m}\bigg)\cdot\\
& \qquad\qquad\qquad\qquad\bigg(\sum_{r^2+s^2\not=0}\big[(p+2rN)^2+(q+2sN)^2\big]^m|\widehat{u}_{p+2rN,q+2sN}|^2\bigg)\bigg\}\\
& \le \max_{|p|,|q|\le N}\bigg(\sum_{r^2+s^2\not=0}\big[(p+2rN)^2+(q+2sN)^2\big]^{-m}\bigg)\cdot\\
& \qquad\qquad\bigg(\sum_{p=-N}^N\sum_{q=-N}^N\sum_{r^2+s^2\not=0}\big[(p+2rN)^2+(q+2sN)^2\big]^m|\widehat{u}_{p+2rN,q+2sN}|^2\bigg)\\
& \le \bigg(\sum_{r^2+s^2\not=0}\big[(2rN-N)^2+(2sN-N)^2\big]^{-m}\bigg)\cdot2|u|_m^2\\
& =2N^{-2m}|u|_m^2\sum_{r^2+s^2\not=0}\frac{1}{\big[(2r-1)^2+(2s-1)^2\big]^m}.
\end{align*}
The series
\begin{align*}
\sum_{r^2+s^2\not=0}\frac{1}{\big[(2r-1)^2+(2s-1)^2\big]^m}
& \le\sum_{r^2+s^2\not=0}\frac{1}{(2r-1)^{2m}+(2s-1)^{2m}}\\
& \le\frac{1}{2}\sum_{r^2+s^2\not=0}\frac{1}{|2r-1|^{m}|2s-1|^{m}}<\infty,\quad\text{if $m>1$},
\end{align*}
so we obtain $D_1\le2SN^{-2m}|u|_m^2$, where $S$ is the sum of the series above.

As a result of (i)--(iv), we obtain
$$B\le2(1+4S)L^2\e^\frac{\tau}{2\delta}N^{-2m}|u|_m^2.$$
So we obtain
$$\|I^h\SL u-\SLh I^hu\|^2\le4(5+4S)L^2|u|_m^2\e^\frac{\tau}{2\delta}N^{-2m},$$
which leads to the expected result.\qquad
\end{proof}

Now we write the discrete $L^2$-error estimate as the following theorem.

\begin{theorem}
\label{thm_error2d}
Assume that $u_0\in\Hp^m(\Omega)$ with $m>1$ and the condition {\rm(\ref{nonlinear_stability})} holds.
If we set $u^0=\widetilde{U}^0=I^hu_0$, then the discrete $L^2$-error at $T=n\tau$ is
\begin{equation}
\label{error_estimate}
\|U^{n}-u^{n}\|\le C\Big(\tau^2+h^4+\frac{h^m}{\tau}\Big).
\end{equation}
Furthermore, if $m\ge6$ and $\tau\sim h^2$, then
\begin{equation}
\label{error_estimate_optimal}
\|U^{n}-u^{n}\|\le C(\tau^2+h^4).
\end{equation}
\end{theorem}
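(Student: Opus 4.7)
The plan is to split the total error into a continuous Strang splitting error and a fully discrete propagation error via the triangle inequality
$$\|U^n-u^n\|\le\|U^n-\widetilde{U}^n\|+\|\widetilde{U}^n-u^n\|.$$
The first piece measures how well the split PDE approximates the original one; since $\widetilde{u}^n$ is obtained by composing exact solution operators, standard Strang analysis combined with the $L^2$-stability of $\SN$ (Proposition \ref{prop_stability_SN}) and the spectral stability of $\SL$ gives $\|u(\cdot,t_n)-\widetilde{u}(\cdot,t_n)\|_{L^2(\Omega)}=\mathcal{O}(\tau^2)$, uniformly on bounded time intervals. Since $I^h$ agrees with continuous sampling, one also pays at most $\mathcal{O}(h^m)$ when converting the continuous $L^2$-norm into the discrete one (spectral accuracy of the trapezoidal rule on periodic functions), so $\|U^n-\widetilde{U}^n\|\le C(\tau^2+h^m)$.

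For the second piece, set $e^n:=\widetilde{U}^n-u^n$ and estimate it via the three-substep recursion. Inserting the telescoping quantities $\SLh I^h\widetilde{u}^n$, $\SNh I^h(\SL\widetilde{u}^n)$, $\SLh I^h(\SN\SL\widetilde{u}^n)$ and applying the lemmas in order:
\begin{itemize}
\item $\SLh$-step: Lemma \ref{lem_error_SLh} contributes a consistency term $C_3|\cdot|_m\,\e^{\tau/(8\delta)}h^m$, and Lemma \ref{lem_stability_SLh} multiplies the incoming error by $\e^{\tau/(8\delta)}$.
\item $\SNh$-step: Lemma \ref{lem_error_SNh} contributes $C_1\tau(\tau^3+h^4)$, and Lemma \ref{lem_stability_SNh} passes the incoming error through with an additive $2C_1\tau(\tau^3+h^4)+C_2h^m$.
\end{itemize}
The Sobolev semi-norms $|\widetilde{u}^{n,i}|_m$ that appear in the $\SLh$ consistency terms are uniformly bounded in $n$ by the Remark, because $\SN$ and $\SL$ preserve $\Hp^m(\Omega)$. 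Combining the three substeps and absorbing $\e^{\tau/(8\delta)}$-factors yields a one-step recursion of the form
$$\|e^{n+1}\|\le\e^{\tau/(4\delta)}\|e^n\|+K\bigl[\tau(\tau^3+h^4)+h^m\bigr]$$
for a constant $K$ independent of $n$, $\tau$, $h$, with initial value $\|e^0\|=0$.

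A discrete Gronwall argument applied over $n\le T/\tau$ steps then gives
$$\|e^n\|\le K\,\e^{T/(4\delta)}\cdot n\bigl[\tau(\tau^3+h^4)+h^m\bigr]\le C\bigl(\tau^3+h^4+h^m/\tau\bigr),$$
since $n\tau\le T$ and $n\cdot h^m=(T/\tau)h^m$. Adding this to the splitting bound $\mathcal{O}(\tau^2+h^m)$ and noting that $h^m\le h^m/\tau$ for small $\tau$ produces (\ref{error_estimate}). For the optimal version (\ref{error_estimate_optimal}), simply note that when $m\ge 6$ and $\tau\sim h^2$, the residual term satisfies $h^m/\tau\le h^6/\tau\sim h^4$ and is absorbed.

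The main technical obstacle is bookkeeping rather than analysis: keeping the exponential factors $\e^{\tau/(8\delta)}$ from compounding into an $n$-dependent (rather than $T$-dependent) blow-up, and making sure the $C_2 h^m$ term from Lemma \ref{lem_stability_SNh}—which is a per-step nuisance with no $\tau$ in front—is swept into the same $h^m/\tau$ bucket by the same Gronwall step. Uniform control of $|\widetilde{u}^{n,i}|_m$ via the Remark on $H^m$-regularity is the one ingredient that must be invoked explicitly, because without it the constants in Lemma \ref{lem_error_SLh} drift with $n$.
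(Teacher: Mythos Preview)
Your proposal is correct and follows essentially the same route as the paper: the same triangle split $\|U^n-u^n\|\le\|U^n-\widetilde U^n\|+\|\widetilde U^n-u^n\|$, the same telescoping of $I^h\SL\SN\SL\widetilde u^{n-1}-\SLh\SNh\SLh u^{n-1}$ through the four lemmas, and the same discrete Gronwall finish. Two cosmetic differences: (i) you are a bit more careful than the paper in charging an extra $\mathcal O(h^m)$ when passing the Strang bound from the continuous to the discrete $L^2$-norm (the paper writes $\|U^n-\widetilde U^n\|\le C_0\tau^2$ directly), and (ii) your amplification factor $\e^{\tau/(4\delta)}$ per full splitting step is the sharper reading of Lemma~\ref{lem_stability_SLh} applied to half-steps, whereas the paper carries $\e^{\tau/(2\delta)}$; neither point affects the final estimate.
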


\begin{proof}
Assume that $u^{n-1}$, the numerical solution at $t_{n-1}$-level, is given,
then the discrete $L^2$-error at $t_{n}$-level should be
\begin{equation}
\label{estimate1}
\|U^{n}-u^{n}\|\le\|U^{n}-\widetilde{U}^{n}\|+\|\widetilde{U}^{n}-u^{n}\|.
\end{equation}
The Strang splitting scheme (\ref{StrangOS_LNL}) is second-order \cite{Str68}, which means that
$$\|U^{n}-\widetilde{U}^{n}\|\le C_0\tau^2.$$
The second term in the RHS of (\ref{estimate1}) can be bounded as follows:
\begin{align}
\|\widetilde{U}^{n}-u^{n}\| & =\|I^h\SL\SN\SL\widetilde{u}^{n-1}-\SLh\SNh\SLh u^{n-1}\|\nonumber\\
& \le\|I^h\SL\SN\SL\widetilde{u}^{n-1}-\SLh I^h\SN\SL\widetilde{u}^{n-1}\|+\|\SLh I^h\SN\SL\widetilde{u}^{n-1}-\SLh\SNh\SLh u^{n-1}\|\nonumber\\
& =\|(I^h\SL-\SLh I^h)\SN\SL\widetilde{u}^{n-1}\|+\|\SLh(I^h\SN\SL\widetilde{u}^{n-1}-\SNh\SLh u^{n-1})\|\nonumber\\
& \le C_3|\SN\SL\widetilde{u}^{n-1}|_m\e^{\frac{\tau}{4\delta}}h^{m}
+\e^{\frac{\tau}{4\delta}}\|I^h\SN\SL\widetilde{u}^{n-1}-\SNh\SLh u^{n-1}\|,\label{estimate2}
\end{align}
where the last inequality is the consequences of Lemmas \ref{lem_stability_SLh} and \ref{lem_error_SLh}.
Besides,
\begin{align}
\|I^h\SN\SL\widetilde{u}^{n-1}-\SNh\SLh u^{n-1}\|
& \le\|I^h\SN\SL\widetilde{u}^{n-1}-\SNh I^h\SL\widetilde{u}^{n-1}\|+\|\SNh I^h\SL\widetilde{u}^{n-1}-\SNh\SLh u^{n-1}\|\nonumber\\
& =\|(I^h\SN-\SNh I^h)\SL\widetilde{u}^{n-1}\|+\|\SNh(I^h\SL\widetilde{u}^{n-1})-\SNh(\SLh u^{n-1})\|\nonumber\\
& \le3C_1\tau(\tau^3+h^4)+C_2h^m+\|I^h\SL\widetilde{u}^{n-1}-\SLh u^{n-1}\|,\label{estimate3}
\end{align}
where the last inequality is the consequences of Lemmas \ref{lem_error_SNh} and \ref{lem_stability_SNh}.
Furthermore,
\begin{align}
\|I^h\SL\widetilde{u}^{n-1}-\SLh u^{n-1}\|
& \le\|I^h\SL\widetilde{u}^{n-1}-\SLh I^h\widetilde{u}^{n-1}\|+\|\SLh I^h\widetilde{u}^{n-1}-\SLh u^{n-1}\|\nonumber\\
& =\|(I^h\SL-\SLh I^h)\widetilde{u}^{n-1}\|+\|\SLh(\widetilde{U}^{n-1}-u^{n-1})\|\nonumber\\
& \le C_3|\widetilde{u}^{n-1}|_m\e^{\frac{\tau}{4\delta}}h^{m}+\e^{\frac{\tau}{4\delta}}\|\widetilde{U}^{n-1}-u^{n-1}\|,\label{estimate4}
\end{align}
where we use the fact $\widetilde{U}^{n-1}=I^h\widetilde{u}^{n-1}$.
Combining (\ref{estimate2})--(\ref{estimate4}) with (\ref{estimate1}),
we obtain
$$\|\widetilde{U}^{n}-u^{n}\|\le\e^{\frac{\tau}{2\delta}}\|\widetilde{U}^{n-1}-u^{n-1}\|
+3C_1\e^{\frac{\tau}{4\delta}}\tau(\tau^3+h^4)+\big(C_2+C_3C_T(1+\e^{\frac{\tau}{4\delta}})\big)\e^{\frac{\tau}{4\delta}}h^{m},$$
where $C_T=\max\{|\SN\SL\widetilde{u}^{k}|_m,|\widetilde{u}^{k}|_m:0\le k\le n\}$.
Setting $F^n=\|\widetilde{U}^n-u^n\|$ and
$$G=3C_1\e^{\frac{\tau}{4\delta}}(\tau^3+h^4)+\big(C_2+C_3C_T(1+\e^{\frac{\tau}{4\delta}})\big)\e^{\frac{\tau}{4\delta}}\frac{h^{m}}{\tau},$$
we have$$F^{n}\le\e^{\frac{\tau}{2\delta}}F^{n-1}+\tau G,\quad n=1,2,\dots.$$
Using the Gronwall's lemma and the fact $\e^x-1\ge x$ ($x>0$), we obtain
$$F^n\le\e^{\frac{T}{2\delta}}F^{0}+\frac{\tau(\e^{\frac{T}{2\delta}}-1)}{\e^{\frac{\tau}{2\delta}}-1}G
\le\e^{\frac{T}{2\delta}}F^{0}+2\delta\e^{\frac{T}{2\delta}}G.$$
Since $F^0=\|\widetilde{U}^0-u^0\|=0$, we obtain
$$\|\widetilde{U}^n-u^n\|\le2\delta\e^{\frac{T}{2\delta}}\Big(3C_1\e^{\frac{\tau}{4\delta}}(\tau^3+h^4)
+\big(C_2+C_3C_T(1+\e^{\frac{\tau}{4\delta}})\big)\e^{\frac{\tau}{4\delta}}\frac{h^{m}}{\tau}\Big),$$
and thus
$$\|U^n-u^n\|\le C_0\tau^2+2\delta\e^{\frac{T}{2\delta}}\Big(3C_1\e^{\frac{\tau}{4\delta}}(\tau^3+h^4)
+\big(C_2+C_3C_T(1+\e^{\frac{\tau}{4\delta}})\big)\e^{\frac{\tau}{4\delta}}\frac{h^{m}}{\tau}\Big).$$
If $\tau\le\min\{4\delta\ln2,1\}$, then $\e^{\frac{\tau}{4\delta}}\le 2$, so we obtain
$$\|U^n-u^n\|\le(C_0+12\delta\e^{\frac{T}{2\delta}}C_1)\tau^2+12\delta\e^{\frac{T}{2\delta}}C_1h^4+4\delta\e^{\frac{T}{2\delta}}(C_2+3C_3C_T)\frac{h^{m}}{\tau},$$
which implies the estimate (\ref{error_estimate}).

Furthermore, we set the step $\tau\sim h^2$ to obtain
$$\frac{h^{m}}{\tau}\sim h^{m-2}\sim\tau^{\frac{1}{2}m-1}.$$
As long as $m\ge6$ holds, we obtain the error estimate (\ref{error_estimate_optimal}).\qquad
\end{proof}

\subsection{Accuracy tests}

Now we carry out the accuracy tests on the equation (\ref{MBE2D_model1}) with $\delta=0.1$, $T=1$, $\Omega=(0,2\pi)\times(0,2\pi)$, and
$$u_0(x,y)=0.1(\sin3x\sin2y+\sin5x\sin5y),$$
which is a classical example studied either theoretically or numerically \cite{LiLi03,QiSuZh12,QiZhTa11,XuTa06}.
We take the numerical solution obtained with $\tau=5\times10^{-5}$ and $J=2048$ as the ``exact'' solution.
The tests are conducted with different spatial scales, and the time step is set to be $\tau=C_0h^2$,
where the constant $C_0$ is chosen to render $\tau=0.005$ when $J=128$.
\tablename~\ref{tab_accuracy2d} shows the discrete $L^2$-errors implying the accuracy nearly $\mathcal{O}(\tau^2+h^4)$,
which is consistent with Theorem \ref{thm_error2d}.

\begin{table}[!htp]
\centering
\caption{The discrete $L^2$-errors with different spatial scales and time steps.}\label{tab_accuracy2d}
\begin{tabular}{|c|c|c|c|c|}
\hline
$J$ & $\tau$ & $\|e(J)\|$ & $\dfrac{\|e(J/2)\|}{\|e(J)\|}$ & $\log_2\dfrac{\|e(J/2)\|}{\|e(J)\|}$ \\
\hline
$128$ & $5\times10^{-3}$ & $1.0278\times10^{-5}$ & $*$ & $*$ \\
\hline
$256$ & $1.25\times10^{-3}$ & $9.5361\times10^{-7}$ & $10.7779$ & $3.4300$ \\
\hline
$512$ & $3.125\times10^{-4}$ & $6.5869\times10^{-8}$ & $14.4774$ & $3.8557$ \\
\hline
$1024$ & $7.8125\times10^{-5}$ & $2.4026\times10^{-9}$ & $27.4156$ & $4.7769$ \\
\hline
\end{tabular}
\end{table}

\section{Numerical experiments}

\quad

\begin{example}
\label{numerical_eg1}
We consider the one-dimensional MBE model
\begin{align*}
& u_t=(u_x^3)_x-u_{xx}-\delta u_{xxxx},\qquad\qquad\qquad\quad (x,t)\in(0,12)\times(0,T],\\
& u(\cdot,t)~\text{is $12$-periodic},\qquad\qquad\qquad\qquad\quad\,\ \ t\in[0,T],\\
& u(x,0)=0.1\Big(\sin\frac{\pi x}{2}+\sin\frac{2\pi x}{3}+\sin\pi x\Big),\ \ x\in[0,12].
\end{align*}
\end{example}

The evolution of this initial-boundary-value problem is studied theoretically via the perturbation analysis \cite{LiLi03}
to observe the morphological instability due to the nonlinear interaction.
It is also a classical example for the numerical experiments in the case $\delta=1$.
Here we will present the results obtained by the operator splitting method given in Section 2.
Fig.~\ref{fig_d0} shows the results of the case $\delta=1$ with $J=128$ and $\tau=0.1$,
which is consistent with the existing work \cite{LiLi03}.

\begin{figure}[h]
\centering
\subfigure[height: $t=0$]{\includegraphics[scale=0.65]{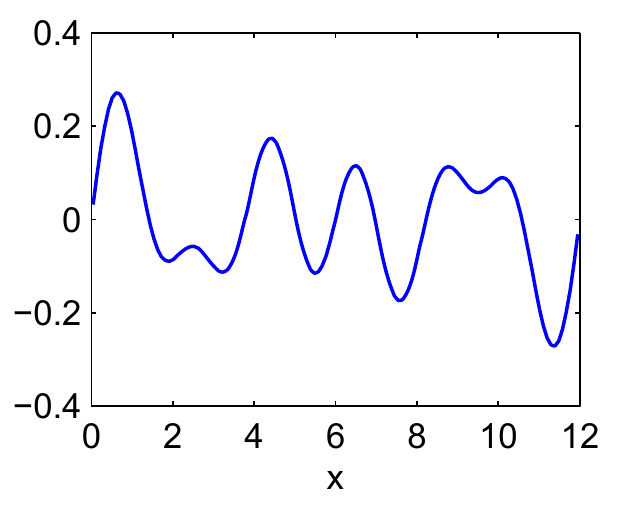}}
\subfigure[height: $t=0.5$]{\includegraphics[scale=0.65]{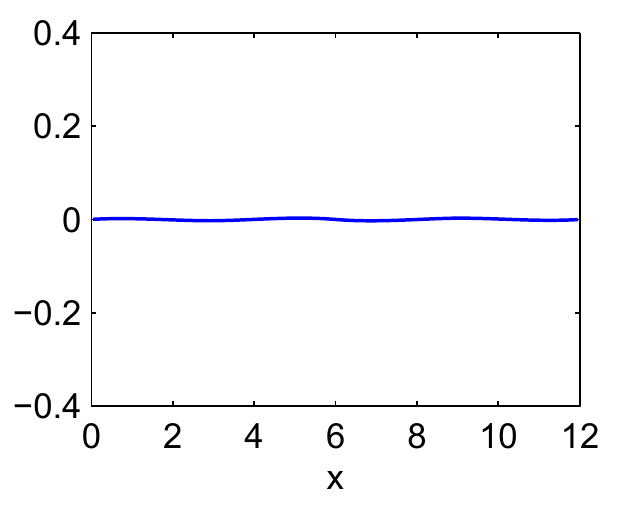}}
\subfigure[height: $t=15$]{\includegraphics[scale=0.65]{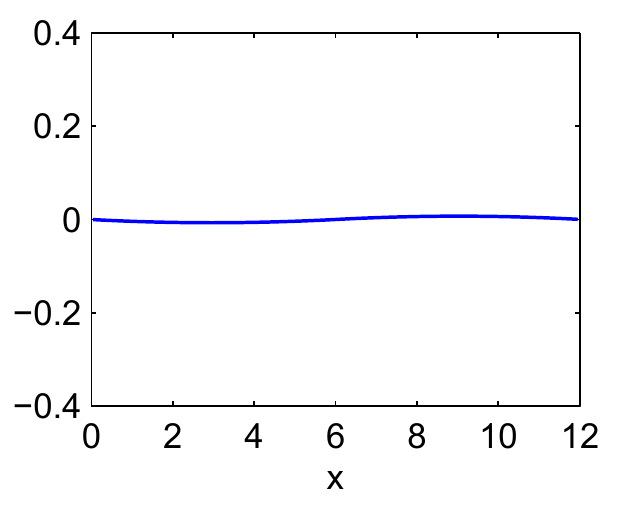}}
\subfigure[height: $t=20$]{\includegraphics[scale=0.65]{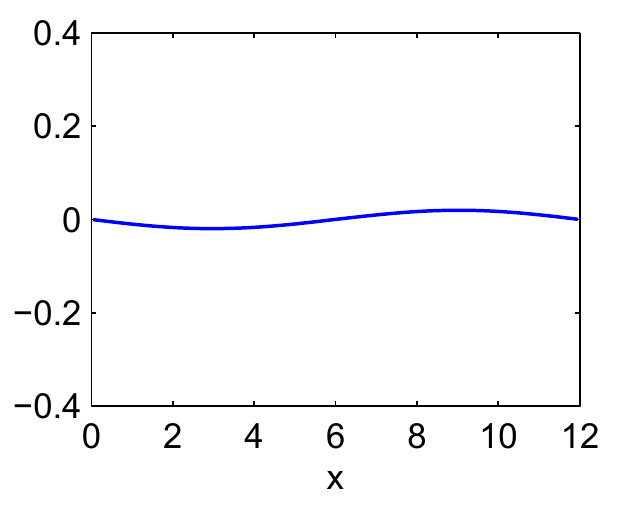}}
\subfigure[height: $t=30$]{\includegraphics[scale=0.65]{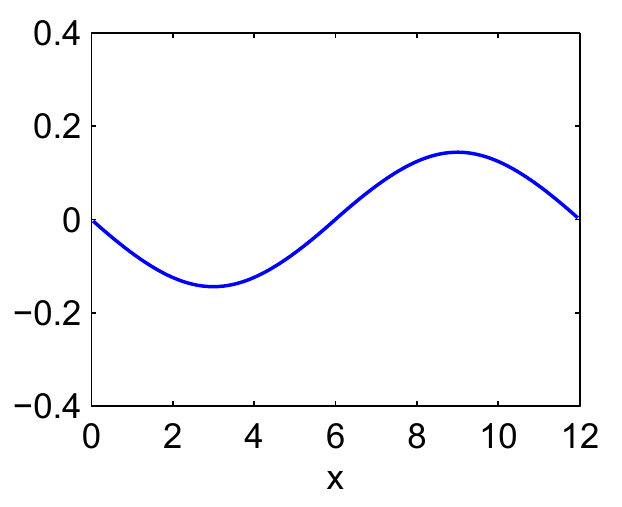}}
\subfigure[height: $t=60$]{\includegraphics[scale=0.65]{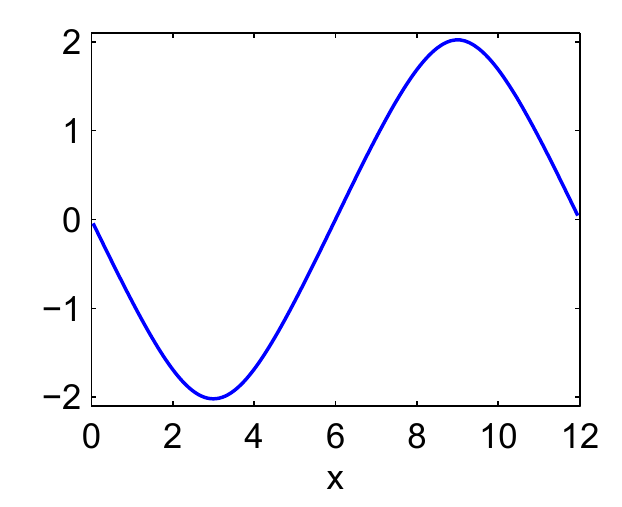}}
\subfigure[height: $t=100$]{\includegraphics[scale=0.65]{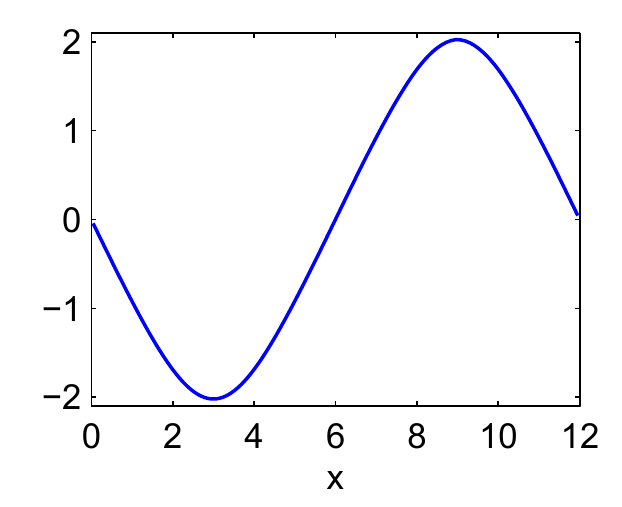}}
\subfigure[gradient: $t=100$]{\includegraphics[scale=0.65]{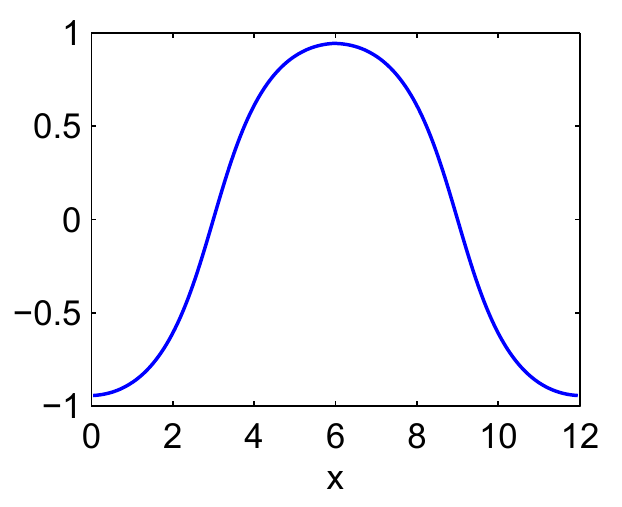}}
\subfigure[energy: $0\le t\le 100$]{\includegraphics[scale=0.65]{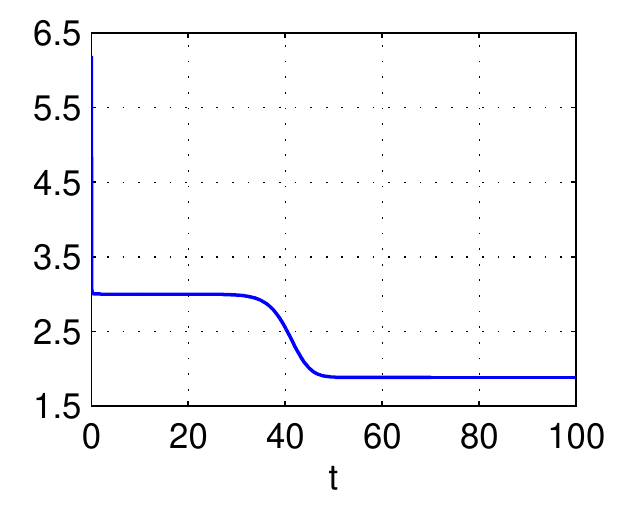}}
\caption{Example \ref{numerical_eg1}: The results of the case $\delta=1$.}\label{fig_d0}
\end{figure}

\begin{figure}[h]
\centering
\subfigure[height: $t=200$]{\includegraphics[scale=0.65]{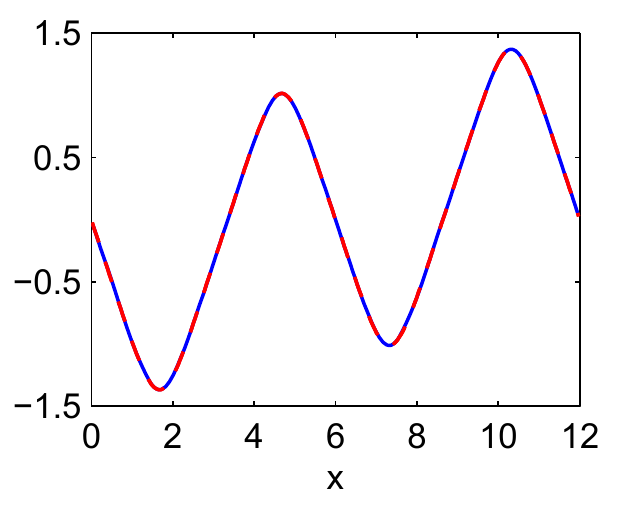}}
\subfigure[gradient: $t=200$]{\includegraphics[scale=0.65]{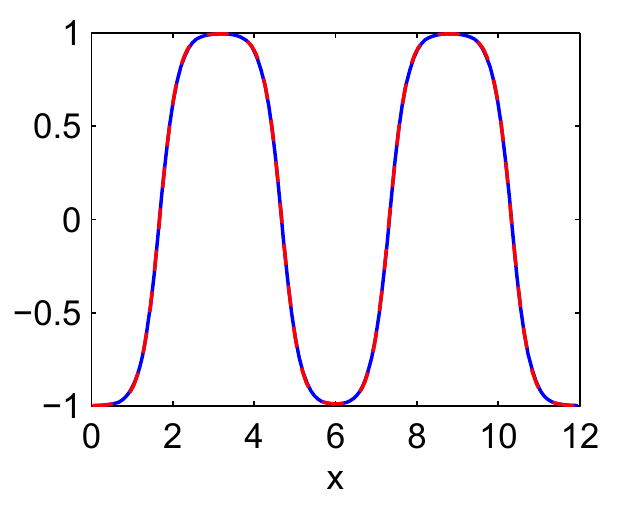}}
\subfigure[energy: $0\le t\le 200$]{\includegraphics[scale=0.65]{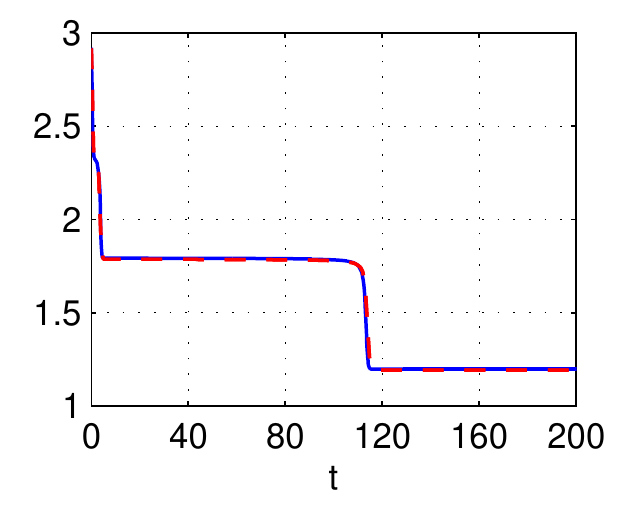}}
\caption{Example \ref{numerical_eg1}: The results of the case $\delta=0.1$
obtained with $(J,\tau)=(128,0.01)$ (solid line) and $(J,\tau)=(256,0.005)$ (dash line).}\label{fig_d1}
\end{figure}

\begin{figure}[h]
\centering
\subfigure[height: $t=500$]{\includegraphics[scale=0.65]{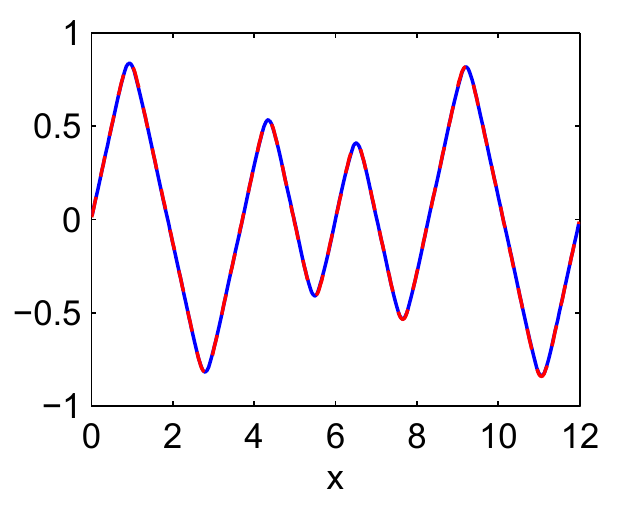}}
\subfigure[gradient: $t=500$]{\includegraphics[scale=0.65]{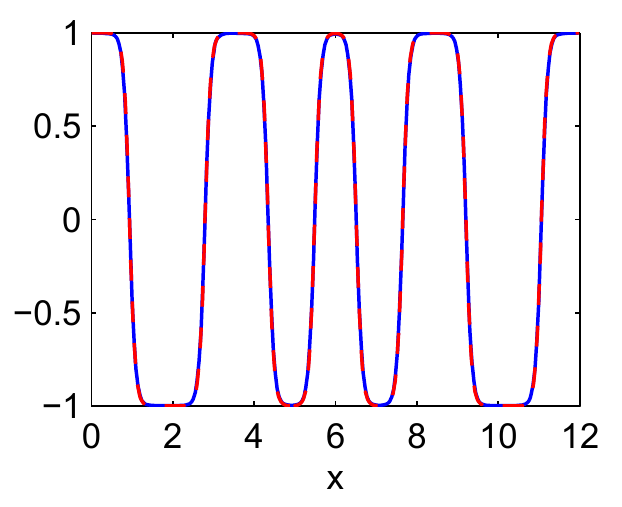}}
\subfigure[energy: $0\le t\le 500$]{\includegraphics[scale=0.65]{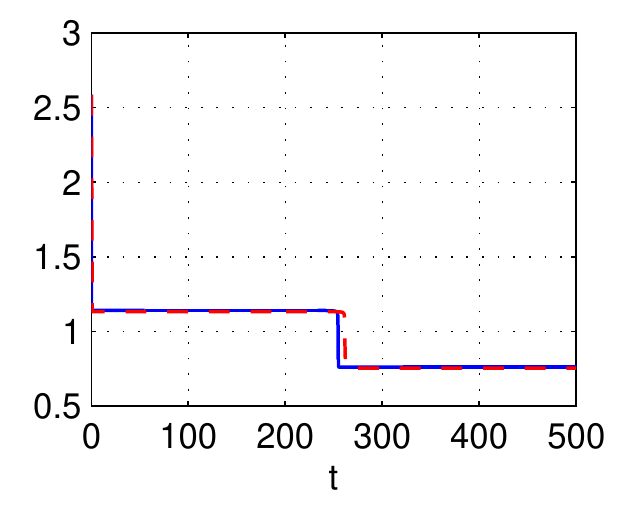}}
\caption{Example \ref{numerical_eg1}: The results of the case $\delta=0.01$
obtained with $(J,\tau)=(256,0.001)$ (solid line) and $(J,\tau)=(512,0.0005)$ (dash line).}\label{fig_d2}
\end{figure}

\begin{figure}[!htp]
\centering
\subfigure[height: $t=1000$]{\includegraphics[scale=0.65]{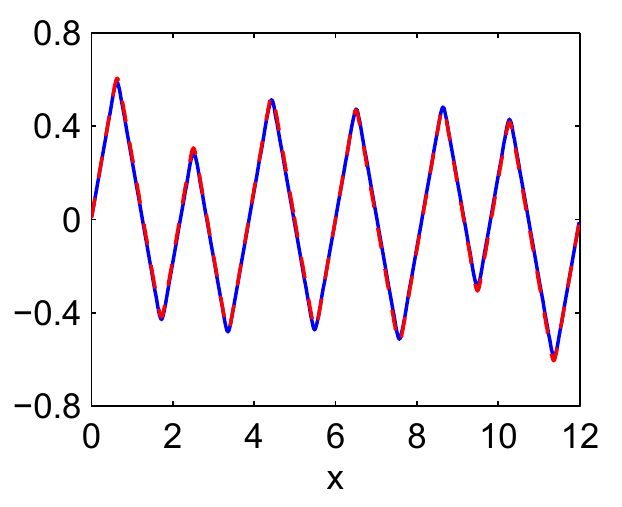}}
\subfigure[gradient: $t=1000$]{\includegraphics[scale=0.65]{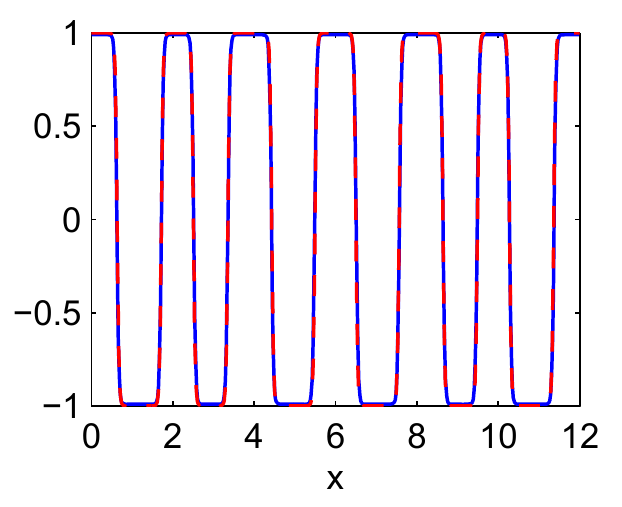}}
\subfigure[energy: $0\le t\le 1000$]{\includegraphics[scale=0.65]{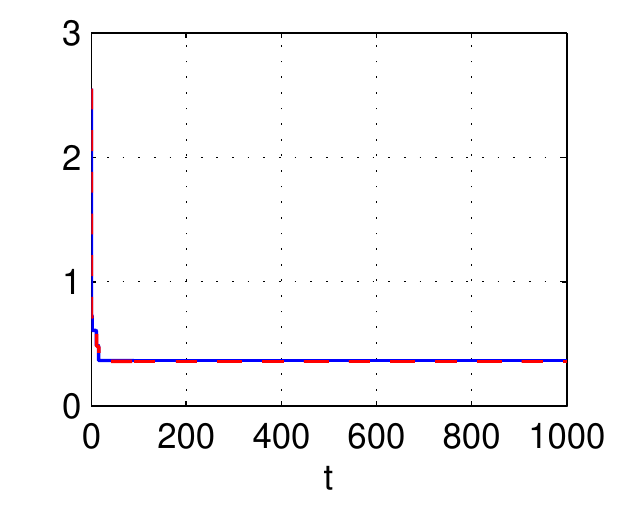}}
\caption{Example \ref{numerical_eg1}: The results of the case $\delta=0.001$
obtained with $(J,\tau)=(512,0.0001)$ (solid line) and $(J,\tau)=(1024,0.00005)$ (dash line).}\label{fig_d3}
\end{figure}

Besides, we also present some results from reducing $\delta$ to $0.1$, $0.01$, and $0.001$, respectively.
The results are summarized in Figs.~\ref{fig_d1}--\ref{fig_d3}.
The first plot in each figure presents the height $u(x,t)$ at some time $t$,
the second one shows the corresponding gradient $u_x(x,t)$,
and the third one plots the evolution of the energy $E(u(\cdot,t))$.

Fig.~\ref{fig_d1} presents the results of the case $\delta=0.1$ with $(J,\tau)=(128,0.01)$ and $(J,\tau)=(256,0.005)$.
We find that both solutions have few differences between each other, so we are convinced that the results presented here are credible.
The energy decreases hardly after $t=200$ so that we view the solution at $t=200$ as the steady state.
It is observed from the left and middle graphs that
there are two complete waves in the steady state whose gradients do not exceed the range between $-1$ and $1$.

Fig.~\ref{fig_d2} gives the results of the case $\delta=0.01$ with $(J,\tau)=(256,0.001)$ and $(J,\tau)=(512,0.0005)$.
Fig.~\ref{fig_d3} gives the results of the case $\delta=0.001$ with $(J,\tau)=(512,0.0001)$ and $(J,\tau)=(1024,0.00005)$.
Likewise, we can trust these results.
The solutions at the steady states present more waves in the considered domain than those above,
and the gradients still locate in the interval $[-1,1]$.

From the gradient graphs of Figs.~\ref{fig_d0}--\ref{fig_d3},
we find that the smaller $\delta$ is, the more points on the gradient curves locate at the horizon lines $y=1$ or $y=-1$.
This is a consequence of the competitions between the Ehrlich-Schwoebel effect and the dissipation mechanism of the energy functional
$$E(u)=\int_0^{2L}\Big(\frac{1}{4}(u_x^2-1)^2+\frac{\delta}{2}u_{xx}^2\Big)\,\d x.$$
It is seen that the Ehrlich-Schwoebel effect selects the slope $|u_x|=1$ while the dissipation term weakens the selection.
To reduce $\delta$ means to weaken the dissipation effect, or equivalently, to strengthen the slope selection.
And thus, the gradient interfaces connecting $-1$ to $1$ turn steep and the solution curves turn sharp there.

\begin{example}[Coarsening dynamics]
\label{numerical_eg2}
We simulate the two-dimensional MBE model {\rm(\ref{MBE2D_model1})} with $\delta=0.1$
on the domain $\Omega=(0,100)\times(0,100)$ with $512\times512$ grid.
We set the initial data as a stochastic state by a random number varying from $-0.001$ to $0.001$ on each grid point.
The time step is set to be $\tau=0.01$.
\end{example}

This example is aimed to verify the power laws for the energy evolution and the height growth.

Fig.~\ref{fig_energy} presents the contour lines of the free energy
$$F_{\text{free}}:=\frac{1}{4}(|\nabla u|^2-1)^2+\frac{\delta}{2}|\Delta u|^2$$
at $t=100$, $2000$, and $30000$.

\begin{figure}[!htp]
\centering
\subfigure[$t=100$]{\includegraphics[scale=0.45]{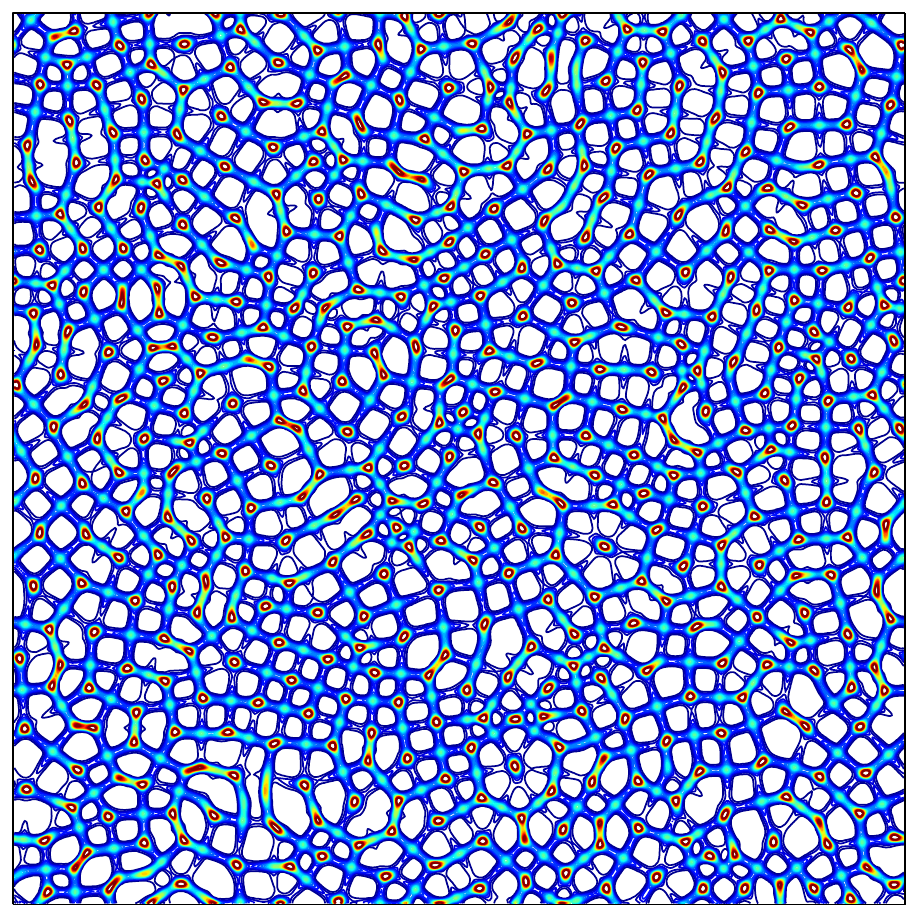}}
\subfigure[$t=2000$]{\includegraphics[scale=0.45]{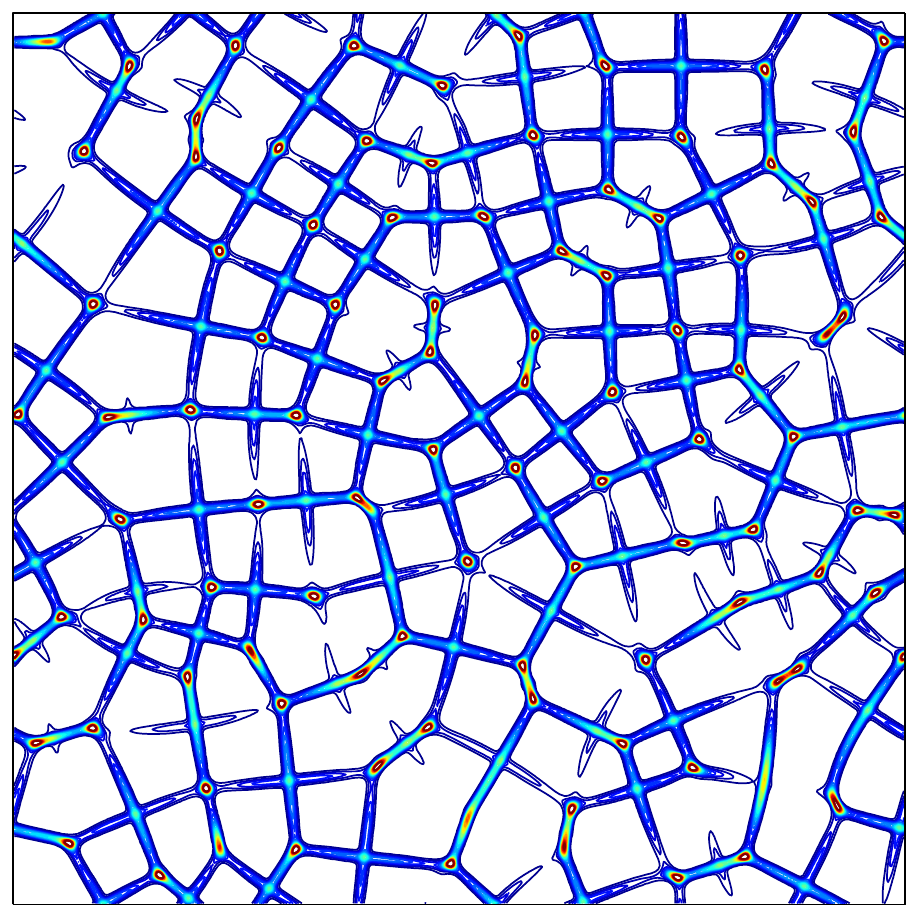}}
\subfigure[$t=30000$]{\includegraphics[scale=0.45]{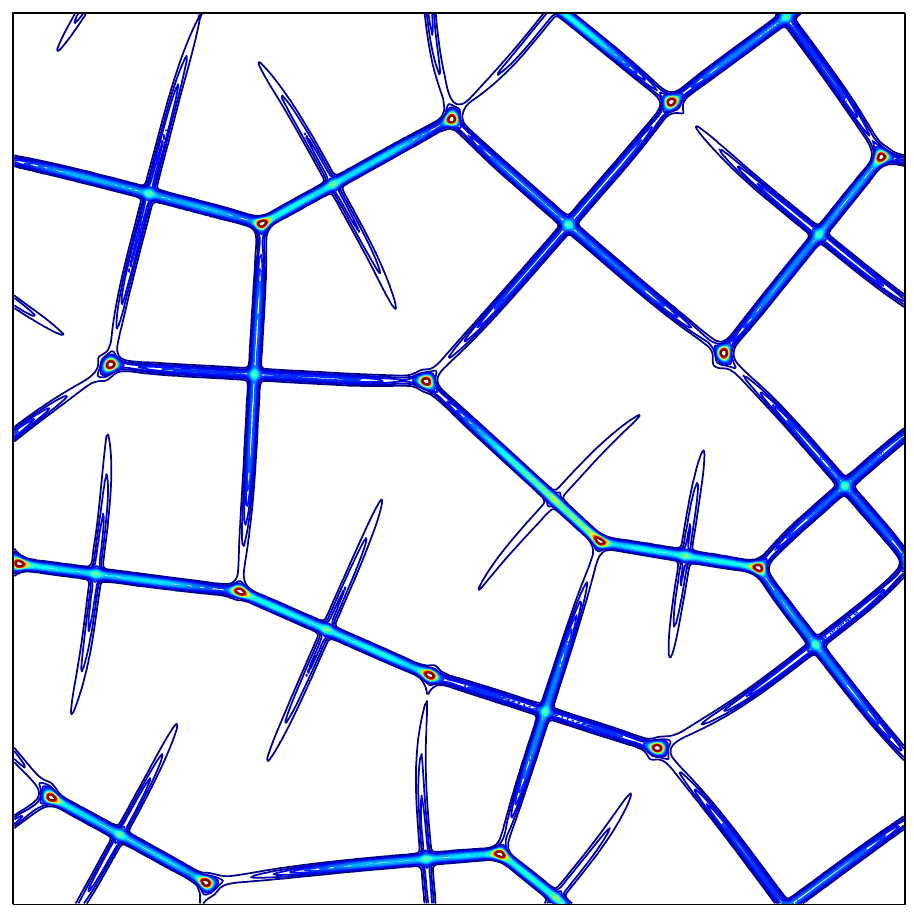}}
\caption{Example \ref{numerical_eg2}: Contour lines of the free energy $F_{\text{free}}$.}\label{fig_energy}
\end{figure}

Fig.~\ref{fig_evolution} shows the evolution of the energy and the interface height.
The (a) presents the power law for the evolution of the energy.
The energy curve is plotted in log-to-log scale and nearly parallels to the dash line $t^{-\frac{1}{3}}$,
which suggests that the energy evolves in time as the power law $Ct^\alpha$ with $\alpha\approx-\frac{1}{3}$.
The (b) presents the power law for the growth of the interface height $h(t)$, which is defined by
$$h(t)=\bigg(\frac{1}{|\Omega|}\int_\Omega u^2(x,y,t)\,\d x\d y\bigg)^\frac{1}{2}.$$
Again, the height curve is plotted in log-to-log scale.
The growth of the interface height approximately obeys the power law $Ct^\beta$ with $\beta\approx\frac{1}{3}$,
which is consistent with the existing works (see, e.g., \cite{QiSuZh12,QiZhTa11,XuTa06}).

\begin{figure}[!htp]
\centering
\subfigure[energy evolution]{\includegraphics[scale=0.9]{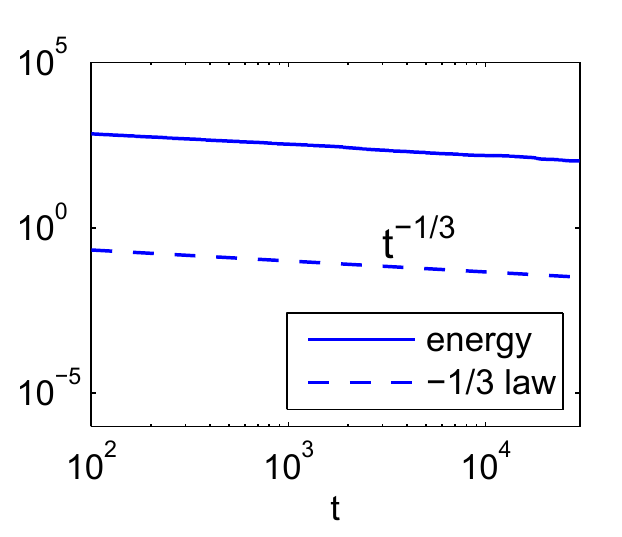}}
\subfigure[height growth]{\includegraphics[scale=0.9]{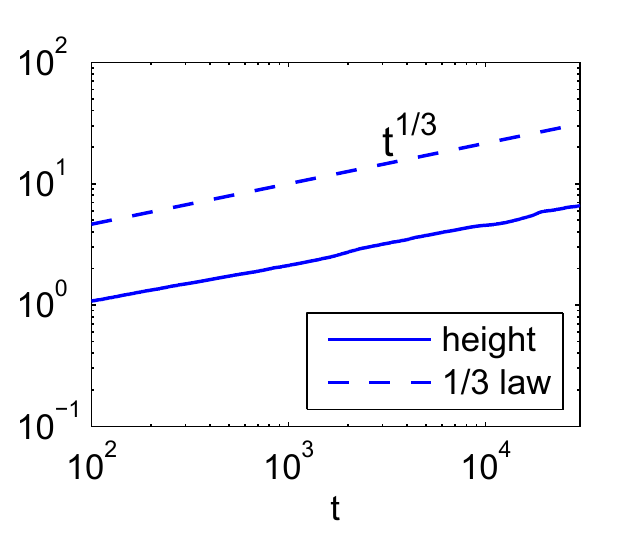}}
\caption{Example \ref{numerical_eg2}: The power law for the evolution of the energy and the interface height.}\label{fig_evolution}
\end{figure}

\section{Conclusions}

In this work, we investigate the error estimate of a fast explicit operator splitting method
for a nonlinear fourth-order diffusion equation modeling epitaxial growth of thin films.
The convergence order $\mathcal{O}(\tau^2+h^4)$ in discrete $L^2$-norm is proved theoretically and verified numerically.
For the nonlinear subproblem, we construct a 25-point center-difference scheme in space and use the third-order explicit SSP-RK scheme in time.
Since fewer points are involved in the scheme at each node compared to the 33-point center-difference scheme presented in \cite{ChKuQuTa15},
the restriction for the stability may be reduced.
We carry out several numerical experiments to verify the efficiency of the derived algorithm
and present some results for small $\delta$'s with the time step $\tau=\delta/10$.
Furthermore, we find numerically the coarsening exponents for the energy evolution and the height growth are $-1/3$ and $1/3$, respectively.

\section*{Acknowledgment}

We are grateful to Prof.~Zhengru Zhang of Beijing Normal University for many valuable comments.
The research of Zhonghua Qiao is partially supported by the Hong Kong Research Council GRF
grants 202112, 15302214, 509213 and NSFC/RGC Joint Research Scheme N\_HKBU204/12.
Hui Zhang is partially supported by NSFC/RGC Joint Research Scheme No.~11261160486,
NSFC grant No.~11471046 and the Ministry of Education Program for New Century Excellent Talents Project NCET-12-0053.

\end{document}